\documentclass[11pt,a4paper]{article}

\usepackage[margin=1in]{geometry}
\usepackage[dvipsnames]{xcolor}
\usepackage[utf8]{inputenc}
  \usepackage[T1]{fontenc}
  \usepackage{textcomp}
  \usepackage{lmodern}
\usepackage{microtype}
\usepackage{amsmath,amssymb,amsthm}
\usepackage{mathtools}
\usepackage{thmtools}
\makeatletter
\@ifundefined{newcounteralias}{}{%
  \renewcommand\thmt@autorefsetup{%
    \@xa\def\csname\thmt@envname autorefname\@xa\endcsname\@xa{\thmt@thmname}%
  }%
}
\makeatother
\usepackage[numbers]{natbib}
\usepackage[colorlinks=true,linkcolor=BrickRed,citecolor=Green,urlcolor=Blue]{hyperref}
\usepackage[capitalise,noabbrev]{cleveref}
\crefname{conjecture}{Conjecture}{Conjectures}
\crefname{problem}{Problem}{Problems}
\makeatletter
\crefname{ALG@line}{Line}{Lines}
\makeatother
\usepackage{soul}
\usepackage{graphicx}
\usepackage{subcaption}
\usepackage{tabularx}
\usepackage{booktabs}
\usepackage{array}
\usepackage{calc}
\usepackage{algorithm}
\usepackage{algpseudocode}
\usepackage{etoolbox}
\makeatletter
\patchcmd{\ALG@step}{\addtocounter{ALG@line}{1}}{\refstepcounter{ALG@line}}{}{\GenericError{}{Failed to patch \noexpand\ALG@step}{}{}}
\newcommand{\linelabel}[1]{\begingroup
  \cref@old@label{#1}%
  \protected@edef\cref@currentlabel{[ALG@line][\arabic{ALG@line}][]\theALG@line}%
  \let\cref@old@label\@gobble
  \label{#1}%
\endgroup}
\makeatother
\usepackage{listings}
\newtheorem{theorem}{Theorem}
\newtheorem{lemma}[theorem]{Lemma}

\newtheorem{corollary}[theorem]{Corollary}
\newtheorem{conjecture}[theorem]{Conjecture}

\NewDocumentCommand\CSLBibitem{o m}{}
\newlength{\cslhangindent}
\newlength{\csllabelwidth}
\newlength{\cslentryspacingunit}
\DeclarePairedDelimiter{\set}{\{}{\}}
\DeclarePairedDelimiterX{\Set}[2]{\{}{\}}{\,#1:#2\,}

\DeclareMathOperator{\cl}{cl}
\DeclareMathOperator{\stab}{stab}
\DeclareMathOperator*{\argmin}{arg\,min}
\DeclareMathOperator*{\argmax}{arg\,max}

\newcommand{\Z}{\mathbb{Z}}

\newcommand{\R}{\mathbb{R}}

\newcommand{\symdiff}{\mathbin{\triangle}}

\title{Proximity Theorem and FPT Algorithm for\\ Minimum Group-labeled Matroid Bases}

\author{
    Siyue Liu\thanks{Carnegie Mellon University, Pittsburgh, USA. E-mail: \texttt{siyueliu@andrew.cmu.edu}.}
    \and Taihei Oki\thanks{Institute for Chemical Reaction Design and Discovery (ICReDD), Hokkaido University, Sapporo, Hokkaido, Japan. D3 Center, The University of Osaka, Osaka, Japan. Center for Advanced Intelligence Project, RIKEN, Tokyo, Japan. E-mail: \texttt{oki@icredd.hokudai.ac.jp}.}
    \and Chao Xu\thanks{University of Electronic Science and Technology of China, Chengdu, China. E-mail: \texttt{the.chao.xu@gmail.com}.}
}
\date{}

\begin{document}

\maketitle

\begin{abstract}
Let \(M\) be a matroid, let \(\ell:E(M)\to G\) be a labeling of its elements by an abelian group, and let \(F\subseteq G\) be finite. A base is \emph{\(F\)-avoiding} if the label sum of its elements is not in $F$. We prove that, for arbitrary costs, a min-cost \(F\)-avoiding base can be obtained from any min-cost base using at most \(|F|\) exchanges. This resolves the weighted proximity conjecture by Hörsch, Imolay, Mizutani, Oki, and Schwarcz. Our main technical tool is to replace the assumption of Schrijver--Seymour Conjecture in Liu and Xu's earlier work with Kemperman--Scherk Theorem. The proximity theorem combined with approaches in earlier work gives an FPT algorithm that finds a min-cost \(F\)-avoiding base parameterized by \(|G|\). We additionally give an FPT algorithm parameterized by \(|F|\), answering a question of Hörsch et al.
\end{abstract}

\section{Introduction}\label{sec:introduction}

Let \(M\) be a finite matroid on ground set \(E\), let \( G\) be an abelian (additive) group, let \(\ell:E\to G\) label the elements, and write \(\ell(B)\coloneqq \sum_{e\in B}\ell(e)\) for a subset $B\subseteq E$. For a finite set \(F\subseteq G\), a base \(B\) is \emph{\(F\)-avoiding} if \(\ell(B)\notin F\). Constraining \(\ell(B)\) encodes parity, congruency, exact-target, and forbidden-value side constraints, which have been studied for matchings~\citep{papadimitriouYannakakis1982Restricted, mulmuleyVaziraniVazirani1987Matching}, arborescences~\citep{barahonaPulleyblank1987Exact}, submodular function minimization~\citep{goemansRamakrishnan1995Minimizing, nageleSudakovZenklusen2019Submodular}, minimum cuts~\citep{nageleZenklusen2020Contraction}, paths in group-labeled graphs~\citep{chudnovskyEtAl2006Packing}, and matroid bases~\citep{rieder1991Lattices, cameriniGalbiatiMaffioli1992Exact, doronAradKulikShachnai2024Lower, horschEtAl2024GroupLabeled, garamvolgyiEtAl2025Proximity}. We study the min-cost $F$-avoiding bases: additionally given a cost function over the ground set \(c:E\to\R\),  we are interested in finding a min-cost $F$-avoiding base, where the cost of a base $B$ is $c(B)\coloneqq \sum_{e\in B}c(e)$. Here, a min-cost $F$-avoiding base refers to one with the smallest cost among all $F$-avoiding bases; it does not have to have the smallest cost among all bases. The \emph{distance} between bases \(A\) and \(B\) is \(d(A,B)\coloneqq |A\setminus B|\), which equals the number of basis exchanges needed to reach one from the other. We ask how far an unconstrained optimum is from a constrained one: given an arbitrary min-cost base, how far is the closest base that is of minimum cost among the \(F\)-avoiding bases?

The unweighted version with costs $c\equiv 0$ has received more attention and substantial progress has been made recently. Namely, given an arbitrary base, how far is the closest $F$-avoiding base, provided that an \(F\)-avoiding base exists? For \(g\in G\), a base \(B\) is a \emph{\(g\)-base} if \(\ell(B)=g\). Liu and Xu~\citep{liuXu2024Congruency} conjectured that, for every finite abelian group \( G\), every attainable target \(g\), and every prescribed base, the closest \(g\)-base to it has distance at most \(| G|-1\). This is a special case of the $F$-avoiding setting by putting $F=G\setminus g$. Liu and Xu proved the conjecture for cyclic groups whose size is a prime-power, as well as for groups whose size is a product of two primes. More interestingly, they reduce the conjecture to an old conjecture of Schrijver and Seymour \cite{schrijver1990spanning} on counting labels of bases. H\"orsch, Imolay, Mizutani, Oki, and Schwarcz~\citep{horschEtAl2024GroupLabeled} proposed the \emph{proximity conjecture} for general $F$-avoiding bases, which states that every base has an \(F\)-avoiding base within distance \(|F|\), provided that an $F$-avoiding base exists.
For \(|F|=1\), the conjecture essentially follows from Rieder's characterization of base lattices~\citep{rieder1991Lattices}. H\"orsch et al.~\citep{horschEtAl2024GroupLabeled} proved the conjecture when \(|F|\le 2\), when the group is ordered, and for strongly base-orderable matroids.
Garamv\"olgyi, Mizutani, Oki, Schwarcz, and Yamaguchi~\citep{garamvolgyiEtAl2025Proximity} proved the conjecture when \(|F|\le4\), via a computer-assisted case analysis, and for sparse paving matroids. Most recently, Chen~\citep{chen2026Sharp} proved the conjecture for representable matroids, and derived a sharp proximity bound for the more general setting where multiple labelings are given, which was introduced in~\citep{garamvolgyiEtAl2025Proximity}.
Bhardwaj~\citep{bhardwaj2026Sharp} independently gave an alternative proof of the proximity conjecture for representable matroids.

Liu and Xu~\cite{liuXu2024Congruency} also proposed the weighted generalization of their conjecture with the same proximity bound, and H\"orsch et al.~\citep{horschEtAl2024GroupLabeled} further generalized it to the $F$-avoiding setting, which is known as the \emph{weighted proximity conjecture}. Namely, every min-cost base has a min-cost \(F\)-avoiding base within distance \(|F|\), provided that an $F$-avoiding base exists. Very little progress was made towards the weighted proximity conjecture compared with the unweighted one. Liu and Xu proved in the special case of $|F|=|G|-1$ a stronger bound \(D( G)-1\), where \(D( G)\) is the Davenport constant, for strongly base-orderable matroids, and for groups of size at most four. H\"orsch et al.~\citep{horschEtAl2024GroupLabeled}  proved the weighted conjecture for the special case where \(|F|=1\).

In this paper, we prove the weighted proximity conjecture in full, formally stated as follows.

\begin{restatable}[Proximity]{theorem}{proximity}\label{thm:proximity}
    Let \(M=(E,\mathcal{I})\) be a matroid, \( G\) be an abelian group, $F\subseteq G$ be a finite subset, $\ell:E\rightarrow G$ be a labeling, and $c:E\rightarrow \mathbb{R}$ be a cost function. For every min-cost base \(A\), there exists a min-cost \(F\)-avoiding base \(B\) such that $d(A,B)\le |F|$, provided that an \(F\)-avoiding base exists.
\end{restatable}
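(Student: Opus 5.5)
We argue by a minimal counterexample together with an exchange structure between $A$ and $B$. Among all min-cost $F$-avoiding bases, fix $B$ minimizing $d(A,B)=k$, and suppose $k>|F|$.

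\emph{Reduction.} By contracting $A\cap B$ and deleting $E\setminus(A\cup B)$ (shifting $F$ by $\ell(A\cap B)$), we may assume $A\cap B=\emptyset$ and $|A|=|B|=k$. Since $A$ is a min-cost base, standard exchange arguments give a perfect matching $\sigma:A\setminus B\to B\setminus A$ in the exchange graph with $c(A-a+\sigma(a))\ge c(A)$. Weighted matroid intersection duality then gives potentials $p$, obtained from the optimality of $A$ and of $B$ within the restricted family. Under these potentials every swap is cost-nondecreasing from $A$, and the bases "between" $A$ and $B$ that are reachable by tight exchanges all have cost $c(A)+\sum(\text{reduced costs})$. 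Following Liu--Xu, I would build a structure (a chain or lattice of sub-bases) in which every intermediate set $A\setminus X\cup Y$ with $|X|=|Y|=j$ is a base and its cost is at most $c(B)$ whenever the relevant exchanges are tight. Minimality of $d(A,B)$ then forces every such intermediate base $B'$ with $c(B')\le c(B)$ and $d(A,B')<k$ to satisfy $\ell(B')\in F$.

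\emph{Counting labels.} For each level $j$, I would consider the set $S_j\subseteq G$ of labels realized by the tight intermediate bases at distance $j$ from $A$. All of these lie in $F$ for $j<k$. The aim is to show that the sizes $|S_j|$ grow: roughly $|S_{j}|\ge \min(\text{stuff},|S_{j-1}|+1)$, or more precisely that the union $\bigcup_{j<k}S_j$ has size at least $k$. That contradicts $k>|F|$. Liu and Xu's earlier argument needed the Schrijver--Seymour conjecture here, to lower-bound the number of label sums of bases of a sumset-like structure. I would instead take each step as a sumset: the labels at level $j$ contain $S_{j-1}+\{\ell(b)-\ell(a)\}$ over admissible pairs. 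I would then apply the Kemperman--Scherk theorem, which for finite $C=A'+B'$ in an abelian group gives $|C|\ge |A'|+|B'|-\min_{c\in C}\nu(c)$, where $\nu(c)$ is the number of representations of $c$. The representation count is controlled by the matching structure: an element with many representations yields many distinct tight intermediate bases with equal label. Summing over the levels, this forces $\sum$ growth of at least one new forbidden value per level. That gives $k$ distinct elements of $F$, and hence the contradiction.

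\emph{Main obstacle.} The hard step is designing the sumset decomposition so that Kemperman--Scherk applies cleanly. One needs a set-valued "sum" whose elements are genuinely realized by bases. Here the matroid exchange property, as opposed to a free product, means not every combination is a base. I would first try a $j$-element path decomposition along $\sigma$ combined with the symmetric exchange property, so that one-step extensions exist for every representation. Then I would argue that a representation count of $1$ at some point is available, unless a shorter $F$-avoiding tight base exists. Verifying that tightness, meaning cost optimality, is preserved under these re-routings via the potentials is the delicate part I expect to consume most of the proof.
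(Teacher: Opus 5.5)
Your reduction to the block matroid $E=A\mathbin{\dot\cup}B$ with shifted forbidden set $F'$ matches the paper. So does your observation that every other base of cost at most $c(B)$ then has its label in $F'$ (so $B$ is the unique min-cost $\ell(B)$-base), and so does your choice of Kemperman--Scherk in place of Schrijver--Seymour. But there is a genuine gap: the counting step, which is the entire content of the theorem, is not carried out, and the decomposition you sketch would not work. Indexing levels by distance from $A$ and forming $S_{j-1}+\{\ell(b)-\ell(a)\}$ over pairs exchangeable with $A$ does not give sumsets realized by bases. A pair exchangeable with $A$ need not be exchangeable with an intermediate base (the element $a$ need not even lie in it), and cost bounds do not compose along such exchanges. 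So you get neither the inclusion of the sumset in the next level nor an element with a unique representation. Even granting per-step inequalities $|S_j|\ge|S_{j-1}|+|H_j|-1$, you would still need a lower bound on $\sum_j(|H_j|-1)$, and nothing in the proposal provides one. The potentials from weighted matroid intersection duality are a distraction: there is no such duality for the family of $F'$-avoiding bases, and none is needed.

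The missing idea is to peel $B$ off by contraction instead of walking away from $A$.
\begin{itemize}
\item Order $B=\{b_1,\dots,b_r\}$ by nondecreasing cost and let $M_i=M/\{b_1,\dots,b_{i-1}\}$.
\item Let $S_i$ be the set of labels of bases of $M_i$ of cost at most $c(\{b_i,\dots,b_r\})$.
\item Let $H_i$ be the set of labels $h$ with $b_i\in\cl_{M_i}(R_h^i)$, where $R_h^i$ is the set of label-$h$ elements of $M_i$ of cost at most $c(b_i)$.
\end{itemize}
Then $S_{i+1}+H_i\subseteq S_i$ holds for genuine reasons: a base of $M_{i+1}$ does not span $b_i$ in $M_i$, so some element of $R_h^i$ extends it to a base of $M_i$. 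By the uniqueness of $B$, the element $\ell(\{b_i,\dots,b_r\})$ has the unique representation $\ell(\{b_{i+1},\dots,b_r\})+\ell(b_i)$ in $S_{i+1}+H_i$. Kemperman--Scherk and telescoping then give $|\ell_{\le c(B)}(M)|\ge 1+\sum_i(|H_i|-1)$.

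Individual gains $|H_i|-1$ can be zero, so there is no ``one new value per level''; only the total is controlled. That control uses structure absent from your plan. Since $A$ is a min-cost base, every circuit has a maximum-cost element in $B$. Together with the uniqueness of $B$, this forces every color class $E_h$ to be independent, and it lets $R_h^i$ be replaced by the full class in the closure condition. Rank telescoping then gives $\sum_i|H_i|=\sum_h r(E_h)=|E|=2r$, hence $|\ell_{\le c(B)}(M)|\ge r+1$. Since all these labels except $\ell(B)$ lie in $F'$, this yields $r\le|F|$.
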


The bound is sharp: in the rank-\(r\) partition matroid with partition classes \(\{a_i,b_i\}\) and capacity $1$ for each part, labels \(\ell(a_i)=0\) and \(\ell(b_i)=1\) over \( G=\Z\), zero cost, and \(F=\{0,1,\ldots,k-1\}\) with \(k\le r\), every \(F\)-avoiding base contains at least \(k\) elements \(b_i\), so the base \(\{a_1,\ldots,a_r\}\) has distance at least \(k\) from every min-cost \(F\)-avoiding base. For a finite group \( G\) and an attainable \(g\), taking \(F= G\setminus\{g\}\) makes the \(F\)-avoiding bases exactly the \(g\)-bases, so the conjecture of Liu and Xu follows. For the unweighted \(g\)-base case and groups not covered by~\citep{liuXu2024Congruency}, the best previously known bounds were \(O(| G|^5)\) for cyclic \( G\) and \(| G|^{O(\log | G|)}\) in general, derived in~\citep{garamvolgyiEtAl2025Proximity} from the sensitivity theorem of Eisenbrand, Rohwedder, and W\k{e}grzycki~\citep{eisenbrandRohwedderWegrzycki2024Sensitivity}.

The proximity theorem alone yields an algorithm finding a min-cost \(F\)-avoiding base with \(n^{O(|F|)}\) independence-oracle queries, where \(n\coloneqq |E|\), by enumerating the bases within distance \(|F|\) of a min-cost base, and H\"orsch et al.~asked whether finding an \(F\)-avoiding base is fixed-parameter tractable in \(|F|\)~\citep{horschEtAl2024GroupLabeled}. We answer the question affirmatively even for the weighted case: an $F$-avoiding base can be found in \(O\bigl(|F|^2n^2+{|F|}^{O(|F|)}T_{\mathrm{MI}}\bigr)\) time, and a min-cost \(F\)-avoiding base can be found in \(2^{O(|F|^2)}\cdot O(n^2\,T_{\mathrm{WMI}})\) time, where \(T_{\mathrm{MI}}\) and \(T_{\mathrm{WMI}}\) denote the running times of unweighted and weighted matroid intersection, respectively, for two matroids over \(n\) elements. In this sense, fixed-parameter tractability is best possible: no algorithm, even randomized, finds an \(F\)-avoiding base in time polynomial in \(n\) and \(|F|\). Indeed, labeling each element by its integer weight and taking \(F=\{0,\ldots,W\}\setminus\{T\}\), where \(W\) is the total weight, encodes deciding whether a base of weight exactly \(T\) exists, which requires time superpolynomial in \(n\) and \(W\) in the independence-oracle model~\citep[Theorem~3]{doronAradKulikShachnai2024Lower}.

The analogous proximity conjecture for several simultaneous labelings, posed in~\citep{garamvolgyiEtAl2025Proximity} together with a lower bound of \(2^k-1\) for \(k\) labelings, remains open for arbitrary matroids.


\subsection{Techniques}\label{sec:techniques}

\paragraph{Proximity theorem.}
We use the terms ``color'' and ``label'' interchangeably. A \emph{color class} is the set of elements with the same label. When $G$ is the cyclic group $\mathbb{Z}_p$ for some prime $p$, the following theorem by Schrijver and Seymour~\cite{schrijver1990spanning} establishes a lower bound on the number of distinct colors of bases in a matroid, in terms of the rank sum of the color classes.
For a matroid $M = (E, \mathcal{I})$ and a labeling function $\ell\colon E \to G$, let $\ell(M)\coloneqq \Set{\ell(B)}{\text{$B$ is a base of $M$}}$.

\begin{theorem}[{Schrijver--Seymour~\cite{schrijver1990spanning}}]\label{thm:schrijvver_Seymour}
    Let $M=(E,\mathcal{I})$ be a matroid with rank function $r$ and let $\ell:E\to \mathbb{Z}_p$ for some prime number $p$. Then, $$|\ell(M)|\geq \min\left\{p,\ \sum_{g\in\mathbb{Z}_p} r(E_g)-r(E)+1\right\},$$ where $E_g\coloneqq \Set{e\in E}{\ell(e)=g}$ for $g \in \Z_p$. 
\end{theorem}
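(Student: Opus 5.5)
We prove the Schrijver--Seymour bound by induction on the rank $r(E)$, using the Cauchy--Davenport theorem in $\Z_p$ as the additive engine: $|X+Y|\ge\min\{p,|X|+|Y|-1\}$ for nonempty $X,Y\subseteq\Z_p$. Write $\sigma(M)\coloneqq\sum_g r(E_g)-r(E)$. Deleting loops changes neither $\ell(M)$ nor $\sigma(M)$, so we may assume $M$ is loopless. If $r(E)=0$, then $\sigma=0$ and $|\ell(M)|=1$, as required. Also, if $M=M_1\oplus M_2$, then $\ell(M)=\ell(M_1)+\ell(M_2)$ and $\sigma(M)\le\sigma(M_1)+\sigma(M_2)$, since rank is additive across components and subadditive within a colour class. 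Cauchy--Davenport applied to the induction hypothesis for $M_1$ and $M_2$ then gives the bound for $M$. We may therefore assume that $M$ is connected.

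For the connected case, the natural step is to pick a colour $g$ with $E_g\neq\emptyset$ and an element $e\in E_g$, and to split bases according to whether they use $e$. The bases containing $e$ give $g+\ell(M/e)$, and the bases avoiding $e$ give $\ell(M\setminus e)$, assuming $e$ is not a coloop, which holds by connectivity when $|E|\ge2$. Contracting $e$ lowers $r(E)$ by one and lowers $r(E_g)$ by one, while $r(E_h)$ for $h\ne g$ can only decrease. Hence $\sigma$ does not increase, though it may drop. A bare union bound is too weak here. Instead, we use the standard Dyson/$e$-transform style argument: show $|\ell(M)|\ge|\ell(M/e)|+1$ unless $\ell(M)$ is already all of $\Z_p$ or $\sigma$ drops. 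To get the $+1$, exhibit a base $B\not\ni e$ whose colour lies outside $g+\ell(M/e)$. Suppose instead $\ell(M\setminus e)\subseteq g+\ell(M/e)$ for every choice of $e$. Then basis exchange applied to pairs of elements of different colours forces the set $S=\ell(M)$ to satisfy $S+(h-g)\subseteq S$ for enough differences $h-g$ that $S$ is a union of cosets of a nontrivial subgroup. In $\Z_p$ this means $S=\Z_p$.

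A cleaner route, and the one I would actually attempt first, is matroid-union/matroid-intersection style. Consider the matroid $N$ on $E$ obtained as the truncation of $\bigoplus_g M|E_g$, and interpret bases of $M$ as common independent sets of $M$ and the direct sum of the restrictions. By the rank formula, $\sigma(M)+1$ measures how much "freedom" exists to shift colour counts: one chooses a base whose colour-count vector $(|B\cap E_g|)_g$ can be varied along $\sigma(M)$ independent unit transfers between colour classes. Each transfer of an element from colour $h$ to colour $h'$ changes the sum by $h'-h\ne0$. Repeatedly applying Cauchy--Davenport to the resulting sumset of $\sigma+1$ two-element sets $\{0,h'-h\}$ gives at least $\min\{p,\sigma+1\}$ values.

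The main obstacle is making these transfers independent. Transfers along a chain must compose into valid bases, which is exactly the place where the matroid intersection/exchange structure, rather than pure additivity, is needed. I expect the bulk of the work to be proving via submodularity that the set of achievable count vectors contains a "box" of dimension $\sigma(M)$ in the appropriate sense. I would try to prove this by induction on $|E|$ with the deletion/contraction choices above.
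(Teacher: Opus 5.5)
Your proposal has a genuine gap, and the route you say you would attempt first cannot work. The claim that the achievable colour-count vectors contain a box of $\sigma(M)$ independently applicable unit transfers is false. Suppose such a box sits at a base $B_0$. Applying simultaneously all transfers that leave a colour $h$ shows that at most $|B_0\cap E_h|$ of them leave $h$, so the box has dimension at most $r(E)$. But $\sigma(M)$ can be as large as $(p-1)r(E)$. Concretely, take $M=U_{1,n}$ with $n\le p$ elements of pairwise distinct colours. Then $\sigma(M)=n-1$ and the theorem guarantees $n$ labels. However, any two bases differ in one element, so a sumset of two-element sets $\{0,h'-h\}$ certified by such a box yields at most $2$ labels. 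The Cauchy--Davenport summands must be allowed to be large, and a product of two-element sets cannot provide that.

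Your first (deletion/contraction) route has the right skeleton but aims at the wrong target, and the periodicity argument you offer for the $+1$ is asserted rather than proved. For a non-loop $e\in E_g$, put $H_e\coloneqq\Set{h\in\Z_p}{e\in\cl_M(E_h)}$, which contains $g$. Since $r_{M/e}(E_h\setminus\{e\})=r(E_h)-\mathbf{1}\{h\in H_e\}$, contraction lowers $\sigma$ by exactly $|H_e|-1$. When $\sigma$ does not drop, $g+\ell(M/e)\subseteq\ell(M)$ already closes the induction, so no $+1$ is needed. When it drops by $k\ge1$, you need a gain of $k$, not $1$.

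The missing idea is the inclusion $\ell(M/e)+H_e\subseteq\ell(M)$. For a base $A$ of $M/e$ we have $e\notin\cl_M(A)$, hence $E_h\not\subseteq\cl_M(A)$ for each $h\in H_e$. Any $f\in E_h\setminus\cl_M(A)$ then makes $A+f$ a base of label $\ell(A)+h$. Cauchy--Davenport gives $|\ell(M)|\ge\min\{p,|\ell(M/e)|+|H_e|-1\}$, and induction on rank yields $\min\{p,\sigma(M)+1\}$. The loop, direct-sum, connectivity and deletion steps are not needed.

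The paper only cites this theorem and does not prove it. Still, this is exactly the mechanism in its proof of \cref{thm:weighted-counting}: the inclusion $S_{i+1}+H_i\subseteq S_i$ together with the rank bookkeeping \eqref{eq:rank-reduction}. Specialising to $c\equiv0$ and using Cauchy--Davenport in place of Kemperman--Scherk removes the need for a uniquely labelled base.
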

For general abelian groups, the same statement does not hold. Instead, Schrijver and Seymour proposed the following conjecture for general groups that generalizes \Cref{thm:schrijvver_Seymour}. For a subset $F\subseteq G$, the \emph{stabilizer} of $F$ is defined by $\stab(F):=\{g\in G: g+F=F\}$. It is not hard to see that $\stab(F)$ is a subgroup of $G$.
\begin{conjecture}[Schrijver--Seymour~\cite{schrijver1990spanning}, see also~\cite{devos2009generalization}]\label{conj:schrijver_seymour}
    Let $M=(E,\mathcal{I})$ be a matroid with rank function $r$ and let $\ell:E\to G$ for some finite abelian group $G$. Let $H=\stab(\ell(M))$ be the stabilizer of $\ell(M)$. Then, $$|\ell(M)|\geq |H|\cdot \min\left\{\left(\sum_{Q\in G/H} r(E(Q))-r(E)+1\right),\frac{|G|}{|H|}\right\},$$ where $E(Q)\coloneqq \Set{e\in E}{\ell(e)\in Q}$ for $Q  \in G/H$. 
\end{conjecture}
To the best of our knowledge, \Cref{conj:schrijver_seymour} is solved when $|G|$ is a product of two primes or $G=\Z_{p^n}$ for a prime $p$ and a positive integer $n$~\cite{devos2009generalization}. Liu and Xu~\cite{liuXu2024Congruency} prove that \Cref{conj:schrijver_seymour} implies the unweighted version of \Cref{thm:proximity} in the special case where $|F|=|G|-1$: for every base $A$, there exists a $g$-base $B$ such that $d(A,B)\le |G|-1$, if one exists. 
Thus, for finite groups and $F=G\setminus\{g\}$, one possible route to the unweighted version of \Cref{thm:proximity} is via the Schrijver--Seymour conjecture. 

We bypass the difficulty by proving an analogue of \Cref{thm:schrijvver_Seymour} for general groups, under the additional assumption that there is a base $B$ of $M$ such that its label is \emph{unique}. The main tool is the Kemperman--Scherk theorem from additive number theory (\Cref{thm:kemperman-scherk})~\citep{scherk1955Distinct, kemperman1956Complexes, Kemperman1960SmallSumsets}. In fact, we prove the following weighted generalization of this analogue, which bounds from below the number of distinct colors of the bases of small cost, in terms of the rank sum of color classes filtered by the cost of the smallest element of $B$.
For a matroid $M = (E, \mathcal{I})$, a labeling function $\ell\colon E \to G$, a cost function $c\colon E \to \R$, and $\theta \in \R$, let $\ell_{\le\theta}(M)\coloneqq \Set{\ell(B)}{\text{$B$ is a base of $M$ with $c(B) \le \theta$}}$.

\begin{restatable}{theorem}{weightedcounting}\label{thm:weighted-counting}
    Let $M=(E,\mathcal{I})$ be a matroid with rank function $r$, $G$ be an abelian group, $\ell:E\rightarrow G$ be a labeling, and $c:E\rightarrow \R$ be a cost function. Suppose that $B$ is the unique min-cost $\ell(B)$-base of $M$. Then, $$|\ell_{\le c(B)}(M)|\ge \sum_{h\in G} r(R_h)-r(E)+1,$$
    where $R_h\coloneqq \Set{e\in E}{\ell(e)=h\text{ and }c(e)\le \min_{b\in B}c(b)}$.
\end{restatable}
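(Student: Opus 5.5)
Set $\theta\coloneqq\min_{b\in B}c(b)$ and write $B=\{b_1,\dots,b_r\}$. The plan is to build, one element at a time, sets of labels of bases of cost at most $c(B)$, and to show their sizes grow by repeated use of the Kemperman--Scherk theorem. The key observation is how a single exchange behaves. Take $e\in R_h\setminus B$ and $b\in B$ with $B-b+e$ a base. Then $c(e)\le\theta\le c(b)$, so $c(B-b+e)\le c(B)$. The label of this base is $\ell(B)-\ell(b)+h$.

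More generally, let $e_1,\dots,e_k$ be elements of the $R_h$'s outside $B$. Suppose each $e_i$ is exchangeable with $b_{\sigma(i)}$ and the combined exchange $B-\{b_{\sigma(i)}\}+\{e_i\}$ is still a base. This holds for instance when the exchange graph has a unique perfect matching on those vertices. The cost still does not increase. So the set $\ell_{\le c(B)}(M)-\ell(B)$ contains sums of elements of the "difference sets" $D_j\coloneqq\{h-\ell(b_j): e\in R_h,\ B-b_j+e \text{ a base}\}\cup\{0\}$.

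The uniqueness hypothesis is what lets us control these multi-exchanges. If $B$ is the unique min-cost $\ell(B)$-base, then no nontrivial exchange sequence returns label $\ell(B)$ at cost $\le c(B)$. This is exactly the condition Kemperman--Scherk needs: in the sum $D_1+\dots+D_r$, the element $0$ has a unique representation $0=0+\dots+0$. That gives
\[
|D_1+\cdots+D_r|\ \ge\ \sum_j|D_j|-r+1 .
\]

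The proof then has three steps.

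\textbf{Step 1 (reduce to the filtered matroid).} Restrict attention to $R\coloneqq\bigcup_h R_h$ together with $B$. I would pass to the matroid $M'\coloneqq M|(R\cup B)$ and work with bases obtained from $B$ by exchanges using elements of $R$. All such bases have cost $\le c(B)$ provided the exchange is "tight". Tightness is exactly what unique min-cost representability should guarantee. I would prove it via the standard argument that a min-cost base with respect to perturbed costs yields the unique-perfect-matching property in the exchange graph.

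\textbf{Step 2 (sumset inequality via Kemperman--Scherk).} Order $B$ and, following Schrijver--Seymour's inductive proof for $\mathbb{Z}_p$, apply \Cref{thm:kemperman-scherk} in place of Cauchy--Davenport. Kemperman--Scherk states that $|X+Y|\ge|X|+|Y|-1$ whenever some element of $X+Y$ has a unique representation. I would induct on $r$ via contraction. Contract $b_1$, or rather pick an element $e\in R\setminus B$ and consider $M/e$ versus $M\setminus e$. Uniqueness of $B$ propagates to the minor, because unique min-cost representation is inherited. This yields two label sets $X$ and $Y$ with $\ell_{\le c(B)}(M)\supseteq X+Y$, where $0\in X$ and $0\in Y$ represent $\ell(B)$ uniquely.

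\textbf{Step 3 (rank-sum bookkeeping).} Check that the rank-sum quantity $\sum_h r(R_h)-r(E)$ is subadditive across the split. Concretely, show that $\sum_h r_{M'}(R_h)-r$ decreases by at most $|X|-1$ per step, mirroring the Schrijver--Seymour counting.

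The main obstacle is Step 2: ensuring that the unique-representation hypothesis of Kemperman--Scherk holds at every stage of the induction. Concretely, the label $\ell(B)$ must remain uniquely represented in the reduced sumsets after contraction or deletion. My first attempt would be to strengthen the induction hypothesis to "$B$ is the unique min-cost $\ell(B)$-base," which is closed under contracting elements of $B$ and deleting elements outside $B$. Costs are generic only up to ties, so ties are the likely difficulty. I would handle them by lexicographic perturbation of $c$, which preserves uniqueness and only shrinks the sets $R_h$ on the relevant side.
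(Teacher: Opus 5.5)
Your outline has the right shape (peel $B$ off by contraction, use Kemperman--Scherk in place of Cauchy--Davenport, telescope a rank sum). However, the concrete mechanism you give is false, and the inductive step is never made precise.

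\textbf{The difference-set argument fails on both of its claims.} First, individually valid exchanges $B-b_j+e$ need not combine into a base, so in general $D_1+\cdots+D_r\not\subseteq\ell_{\le c(B)}(M)-\ell(B)$. Second, uniqueness of $B$ only excludes \emph{bases} of label $\ell(B)$ and cost at most $c(B)$. It gives no control over representations of $0$ in $D_1+\cdots+D_r$ that do not come from bases. For instance, take the rank-$2$ matroid on $\{b_1,b_2,e,e'\}$ whose only dependent pair is $\{e,e'\}$, with $c(b_1)=c(b_2)=1$, $c(e)=c(e')=0$ and labels $0,10,1,9\in\mathbb{Z}$. Here $B=\{b_1,b_2\}$ is the unique min-cost $10$-base, $D_1=\{0,1,9\}$ and $D_2=\{0,-9,-1\}$. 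Yet $0=1+(-1)=9+(-9)$, and $\pm8\in D_1+D_2$ while $\ell_{\le 2}(M)-10=\{0,\pm1,\pm9\}$.

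\textbf{Other problems in Steps 1 and 2.} Cost is never the obstacle, since every element of $R$ is no more expensive than any element of $B$. So the ``tightness'' and perturbation remarks miss the real difficulty, which is whether a combination of exchanges is a base. Perturbing $c$ would moreover shrink the $R_h$ and so weaken the very right-hand side you must prove. In Step 2, contracting some $e\in R\setminus B$ makes $B$ dependent, so the hypothesis is not inherited. You also never specify $X$ and $Y$, or explain why \emph{every} pair in $X\times Y$ gives a cheap base.

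\textbf{The missing idea} is to define the second summand through closure, which forces every pair to combine. Order $B=\{b_1,\dots,b_r\}$ by cost and set $M_i\coloneqq M/\{b_1,\dots,b_{i-1}\}$. Let $S_i$ be the set of labels of bases $A$ of $M_i$ with $c(A)\le c(\{b_i,\dots,b_r\})$. Let $R_h^i$ consist of the elements of $M_i$ with label $h$ and cost at most $c(b_i)$, and put $H_i\coloneqq\{h\in G: b_i\in\cl_{M_i}(R_h^i)\}$.
\begin{itemize}
\item If $A$ is any base of $M_i/b_i$ and $h\in H_i$, then $b_i\in\cl_{M_i}(R_h^i)$ but $b_i\notin\cl_{M_i}(A)$. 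Hence some $e\in R_h^i$ completes $A$ to a base of $M_i$, which gives $S_{i+1}+H_i\subseteq S_i$.
\item Applying the same completion to any representation of $\ell(\{b_i,\dots,b_r\})$ produces a base of $M$ of label $\ell(B)$ and cost at most $c(B)$. By uniqueness of $B$ in $M$ itself (no inheritance to minors is needed), this base is $B$, so the representation is unique.
\item Kemperman--Scherk then yields $|S_i|\ge|S_{i+1}|+|H_i|-1$.
\item The contraction formula yields $r_i(R_h^i)-r_{i+1}(R_h^{i+1})\le\mathbf{1}\{h\in H_i\}$.
\end{itemize}
Summing over $i$ gives the bound $\sum_{h}r(R_h)-r+1$.
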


To see how this counting approach implies the weighted proximity \Cref{thm:proximity}, we follow ideas of~\cite{liuXu2024Congruency}. Pick a min-cost base $A$ and a min-cost $F$-avoiding base $B$ of $M$ such that $d(A,B)$ is minimized. We consider the matroid $M'=(M\mid_{A\cup B})/(A\cap B)$ obtained from deleting elements outside $A\cup B$ and contracting elements inside $A\cap B$. The matroid $M'$ is a \emph{block matroid}, which means its ground set can be partitioned into two bases. The crucial observation is that $B'\coloneqq B\setminus A$ is the \emph{unique} min-cost $F'$-avoiding base of $M'$ for the shifted label set $F'=F-\ell(A\cap B)$. 
Moreover, $A\setminus B$ is a min-cost base of $M'$. A refined form of \Cref{thm:weighted-counting} (\Cref{thm:weighted-counting-block}) in block matroids then shows that the bases of $M'$ of cost at most $c(B')$ attain at least $r(M')+1$ distinct labels. On the other hand, there should be at most $|F'|+1$ distinct labels, since every base of $M'$ of cost at most $c(B')$ other than $B'$ itself has label in $F'$ by the uniqueness. Hence $d(A,B)=|B'|=r(M')\le |F'|=|F|$.

\paragraph{FPT algorithms.}
For a subset $B\subseteq E$, its \emph{signature} is the vector $\sigma(B)={(\sigma_h(B))}_{h\in G}\in\Z_{\ge0}^G$, where $\sigma_h(B)\coloneqq|\Set{e\in B}{\ell(e)=h}|$.
The label of $B$ is determined by its signature as $\ell(B)=\sum_{h\in G}\sigma_h(B)\cdot h$. Therefore, the search for $g$-bases can be reduced to the search for signatures that give the target label $g$.
Moreover, for subsets $A,B\subseteq E$, one has $\|\sigma(A)-\sigma(B)\|_1\le |A\mathbin{\triangle}B|$.
In particular, if bases $A,B$ of $M$ satisfy $d(A,B)\le d$, then $\|\sigma(A)-\sigma(B)\|_1\le 2d$. Therefore, the closeness of bases from the proximity theorem implies the closeness of signatures. 
Assuming the proximity theorem, Liu and Xu~\cite{liuXu2024Congruency} use these ideas to obtain an FPT algorithm by searching for correct signatures of short distances.
Their approach easily extends to finding min-cost $F$-avoiding bases.

Given a finite $F\subseteq G$, a positive integer $d$, and a base $A$, we call a vector $b\in\Z_{\ge0}^G$ a \emph{$d$-close valid signature} for $A$ if the following conditions hold:
\begin{equation}\label{eq:candidate}
    \sum_{h\in G}b_h=r,\qquad
    \sum_{h\in G}b_h\cdot h\notin F,\qquad
    \|b-\sigma(A)\|_1\le 2d.
\end{equation}
Here, $r$ denotes the rank of $M$.
When $G$ is finite, assuming \Cref{thm:proximity} with proximity bound $d$ in place of $|F|$ gives an $O(|G|^{2d}T_{\mathrm{WMI}})$-time algorithm.
First, find a min-cost base $A$ by the greedy algorithm.
By proximity, it suffices to consider bases $B$ with $d(A,B)\le d$, and hence their $d$-close valid signatures.
For each valid signature $b$, the requirement $\sigma(B)=b$ can be encoded by a partition matroid whose parts are $E_g\coloneqq\Set{e\in E}{\ell(e)=g}$ and whose capacities are $b_g$.
A min-cost base with signature $b$ can therefore be found by weighted matroid intersection.
There are at most $|G|^{2d}$ valid signatures, giving the claimed running time.
Applying the bound $d=|F|$ from \Cref{thm:proximity} yields an $O(|G|^{2|F|}T_{\mathrm{WMI}})$-time algorithm, which is FPT parameterized by $|G|$, but not by $|F|$.

Our idea is to construct a finite set $\Lambda\subseteq G$ that contains the labels of all elements in $A\mathbin{\triangle}B$ for a min-cost base $A$ and some min-cost $F$-avoiding base $B$, with $|\Lambda|$ bounded by a function of $|F|$.
This guarantees $\sigma_g(A)=\sigma_g(B)$ for all $g\notin\Lambda$, so the number of relevant valid signatures depends on $|\Lambda|$ and $d$, rather than on $|G|$.

In the unweighted case, let $D\coloneqq\Set{g-h}{g,h\in F}$.
If some component of $M$ contains elements $e,e'$ with $\ell(e)-\ell(e')\notin D$, a circuit exchange yields bases $A$ and $A-e+e'$ whose labels differ by an element outside $D$, so at least one is $F$-avoiding.
Otherwise, shifting the labels in each component by the label of a fixed reference element, and translating $F$ accordingly, gives an equivalent instance whose element labels all lie in $D$.
Since $|D|\le |F|(|F|-1)+1$, taking $\Lambda=D$ and $d=|F|$ in the signature-enumeration subroutine gives an FPT algorithm (\cref{thm:fpt}).

The weighted case requires a different construction of $\Lambda$ because an $F$-avoiding base obtained by a single exchange from a min-cost base need not be min-cost.
After perturbing the cost function so that all relevant optima are unique, the algorithm recursively finds a min-cost $(F\setminus\{f\})$-avoiding base $B_f$ for each $f\in F$, whenever one exists.
It then constructs $K\coloneqq\bigcup_{f\in F}(A\mathbin{\triangle}B_f)$, whose size is $O(|F|^2)$ by \Cref{thm:proximity}.
For every $a\in A$, the algorithm constructs a label set $\Lambda$ consisting of all labels reachable from $\Set{\ell(e)}{e\in K \cup \set{a}}$ by at most $2|F|$ \emph{derivation steps}, each of which adds an element of $F-\ell(A)$ or its inverse.
Every such set $\Lambda$ has size $2^{O(|F|)}$.
For each such $\Lambda$, we apply signature enumeration with $d=|F|$.
Using Schrijver's exchange lemma for linking systems~\citep{schrijver1979Linking}, we show that one of these sets $\Lambda$ contains the labels of all elements in $A\mathbin{\triangle}B$ for the min-cost $F$-avoiding base $B$, thus yielding an FPT algorithm (\cref{thm:weighted-fpt}).

\subsection{Organization}\label{sec:organization}
The rest of this paper is organized as follows.
\cref{sec:preliminaries} provides preliminaries.
\cref{sec:proximity} proves Theorem~\ref{thm:proximity}.
Finally, \cref{sec:fpt} presents the FPT algorithms for finding $F$-avoiding bases.

\section{Preliminaries}\label{sec:preliminaries}


Let \(G\) be an abelian group written additively, let \(\ell\colon E\to G\) be a labeling function, and let $c\colon E\to\R$ be a cost function.
For \(X\subseteq E\), write \(\ell(X)\coloneqq \sum_{e\in X}\ell(e)\) and \(c(X)\coloneqq \sum_{e\in X}c(e)\).
For $X\subseteq E$, write $\ell[X]\coloneqq \Set{\ell(e)}{e\in X}$ for the set of labels occurring on $X$, in contrast to the sum $\ell(X)$.

We use standard matroid terminology, following~\cite{oxley2011Matroid}.
A \emph{matroid} $M=(E,\mathcal{I})$ consists of a pair formed by a finite ground set $E$ and a set family $\mathcal{I} \subseteq 2^E$ over $E$ such that $\emptyset \in \mathcal{I}$, $X \subseteq Y \in \mathcal{I}$ implies $X \in \mathcal{I}$, and for $X,Y \in \mathcal{I}$ with $|X|<|Y|$, there exists $e \in Y\setminus X$ such that $X \cup \set{e} \in \mathcal{I}$.
A subset $X \subseteq E$ is called \emph{independent} if $X \in \mathcal{I}$ and \emph{dependent} otherwise.
A \emph{base} is a maximal independent set.
Every base has the same cardinality, called the \emph{rank} of $M$. A matroid is a \emph{block matroid} if its ground set can be partitioned into two bases.  
For bases \(A\) and \(B\), write \(d(A,B)\coloneqq |A\setminus B|=|B\setminus A|\).
We will need the following \emph{perfect matching lemma}.

\begin{lemma}[{perfect matching~\cite{brualdi1969Comments}}]\label{lem:perfect-matching}
    For any bases $A$ and $B$, there is a bijection $\pi:A\setminus B\to B\setminus A$ such that $A-x+\pi(x)$ is a base for every $x\in A\setminus B$.
\end{lemma}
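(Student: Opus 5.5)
This is the classical perfect matching lemma of Brualdi, and I will prove it by Hall's theorem applied to a bipartite exchange graph. Let $X\coloneqq A\setminus B$ and $Y\coloneqq B\setminus A$, so $|X|=|Y|$. Build the bipartite graph $H$ on $X\cup Y$ with an edge $xy$ whenever $A-x+y$ is a base. The required bijection $\pi$ is exactly a perfect matching in $H$. By Hall's theorem it suffices to show $|N_H(S)|\ge |S|$ for every $S\subseteq X$.

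First I describe the neighborhoods using fundamental circuits. For $y\in Y$, the set $A+y$ contains a unique circuit $C(A,y)$, and $A-x+y$ is a base if and only if $x\in C(A,y)$. Hence $N_H(S)=\Set{y\in Y}{C(A,y)\cap S\neq\emptyset}$. Equivalently, $Y\setminus N_H(S)$ is the set of $y\in Y$ with $C(A,y)\subseteq (A\setminus S)+y$, which holds exactly when $y\in\cl(A\setminus S)$.

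Now I verify Hall's condition by a rank argument. Let $T\coloneqq Y\setminus N_H(S)$, so $T\subseteq \cl(A\setminus S)$. The set $T\cup (A\cap B)$ is a subset of $B$ and hence independent. Both $T$ and $A\cap B\subseteq A\setminus S$ lie in $\cl(A\setminus S)$, so $|T|+|A\cap B|\le r(A\setminus S)=|A|-|S|$. Since $|A|=|A\cap B|+|X|$, this gives $|T|\le |X|-|S|=|Y|-|S|$, and therefore $|N_H(S)|=|Y|-|T|\ge |S|$.

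Hall's theorem then yields a perfect matching, and this matching defines $\pi$. The only step needing care is the rank inequality: I must use that $T$ together with $A\cap B$ is independent, being contained in $B$, and that both lie in the closure of $A\setminus S$. Everything else is standard, so I do not expect a real obstacle.
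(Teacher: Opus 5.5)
Your proof is correct and complete. The fundamental-circuit description of $N_H(S)$, the identity $Y\setminus N_H(S)=Y\cap\cl(A\setminus S)$, the bound $|T|+|A\cap B|\le r(A\setminus S)$ (valid because $T\cup(A\cap B)\subseteq B$ is independent), and Hall's theorem together with $|X|=|Y|$ are all valid. The paper gives no proof of its own and simply cites Brualdi; your argument is the standard Hall's-theorem proof of this result, so there is no alternative route to compare against.
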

\begin{lemma}[{multiple exchange~\cite{greene1973multiple}}]\label{lem:multiple_exchange}
    For every bases $A,B$ and every subset $X\subseteq A\setminus B$, there exists a subset $Y\subseteq B\setminus A$ such that both $(A\setminus X)\cup Y$ and $(B\setminus Y)\cup X$ are bases.
\end{lemma}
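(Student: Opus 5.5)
We prove \Cref{lem:multiple_exchange} via Edmonds' matroid intersection theorem, in the style of Woodall's proof. First we reduce to the disjoint case: contract $A\cap B$ and restrict to $A\cup B$. The resulting matroid has disjoint bases $A\setminus B$ and $B\setminus A$, and a set $(A\setminus X)\cup Y$ is a base of $M$ iff its part outside $A\cap B$ is a base of the minor. So we may assume $A\cap B=\emptyset$, with $|A|=|B|=r$ and $X\subseteq A$.

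We reformulate the two requirements on $Y\subseteq B$ as a common-base problem. The first requirement, that $(A\setminus X)\cup Y$ is a base, says $Y$ is a base of $N_1\coloneqq (M/(A\setminus X))|B$. The second, that $(B\setminus Y)\cup X$ is a base, says $B\setminus Y$ is a base of $(M/X)|B$, i.e., $Y$ is a base of $N_2\coloneqq ((M/X)|B)^*$. Here $X$ is independent, and $B$ spans, so these contractions are well behaved. Both matroids have rank $|X|$:
\[
r(N_1)=r((A\setminus X)\cup B)-|A\setminus X|=|X|,\qquad r(N_2)=|B|-(r-|X|)=|X|.
\]
Hence it suffices to find a common independent set of size $|X|$.

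By the matroid intersection theorem, the maximum size of a common independent set is $\min_{S\subseteq B}\bigl(r_1(S)+r_2(B\setminus S)\bigr)$, so we show this minimum is at least $|X|$. Writing out the rank functions gives
\[
r_1(S)=r(S\cup(A\setminus X))-|A|+|X|,\qquad r_2(B\setminus S)=|B\setminus S|-r+r(S\cup X).
\]
This uses the formula for the dual rank and $r_{M/X}(S)=r(S\cup X)-|X|$. Submodularity of $r$ on the sets $S\cup(A\setminus X)$ and $S\cup X$, whose union is $S\cup A$ and whose intersection is $S$, gives
\[
r(S\cup(A\setminus X))+r(S\cup X)\ge r(S\cup A)+r(S)=r+|S|,
\]
since $A$ spans and $S\subseteq B$ is independent. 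Substituting, and using $|A|=|B|=r$, the sum becomes
\[
r_1(S)+r_2(B\setminus S)\ge r+|S|-r+|X|+|B|-|S|-r=|X|.
\]

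The bound is therefore routine once the problem is set up. The only delicate step, and the one I expect to need the most care, is the bookkeeping in the dual/contraction rank formulas and in the reduction to the disjoint case. Everything else is a single application of matroid intersection plus one use of submodularity.
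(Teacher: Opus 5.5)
Your proof is correct. The reduction to $A\cap B=\emptyset$ is valid: both target sets contain $A\cap B$, because $X\cap B=\emptyset$ and $Y\cap A=\emptyset$. A set $Y\subseteq B$ is a base of $N_1$ exactly when $(A\setminus X)\cup Y$ is a base, and a base of $N_2$ exactly when $(B\setminus Y)\cup X$ is a base. Both matroids have rank $|X|$. Your formulas $r_1(S)=r(S\cup(A\setminus X))-|A|+|X|$ and $r_2(B\setminus S)=|B\setminus S|-r+r(S\cup X)$ are right. The submodularity step on $S\cup(A\setminus X)$ and $S\cup X$, whose union is $S\cup A$ and intersection is $S$, gives $\min_{S\subseteq B}\bigl(r_1(S)+r_2(B\setminus S)\bigr)\ge |X|$. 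A common independent set of that size is therefore a common base, which is the required $Y$.

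There is nothing in the paper to compare against. The paper does not prove this lemma; it quotes it from Greene and uses it as a black box in the proof of \cref{lem:fpt-schrijver}. Your argument is the standard Woodall-style derivation, and it makes the lemma self-contained. The cost is that it relies on Edmonds' matroid intersection theorem, a min-max result stronger than the statement being proved.
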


A \emph{circuit} is a minimal dependent set. For a base $A$ of $M$ and $e\in E\setminus A$, the \emph{fundamental circuit} of $e$ wrt $A$ is the unique circuit in $A\cup \{e\}$ which necessarily contains $e$. 
The \emph{rank function} $r\colon 2^E \to \Z$ of a matroid $M$ is defined by $r(X) \coloneqq \max\Set{|I|}{I \subseteq X,\, I \in \mathcal{I}}$ for $X \subseteq E$.
The \emph{closure} of $X\subseteq E$, denoted by $\cl_M(X)$, is the unique maximal set $Y\supseteq X$ with $r(Y)=r(X)$.
We use the following lemma on a base and a circuit.

\begin{lemma}\label{lemma:base-circuit}
    Let $M$ be a matroid, $B$ be a base, and $C$ be a circuit. Then, for every $b\in B\cap C$, there exists $a\in C\setminus B$ such that $B-b+a$ is a base.
\end{lemma}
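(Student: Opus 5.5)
Since $B$ is a base and $C$ is a circuit containing $b$, I will work with the fundamental cocircuit of $b$ with respect to $B$ rather than with $C$ directly. The set $B-b$ is independent of rank $r(E)-1$, so its closure $H\coloneqq\cl_M(B-b)$ is a hyperplane not containing $b$. Its complement $D\coloneqq E\setminus H$ is a cocircuit containing $b$. For any element $a\notin B$, the set $B-b+a$ is a base exactly when $a\notin\cl_M(B-b)$, i.e.\ when $a\in D$. Therefore the statement reduces to proving $(C\setminus B)\cap D\neq\emptyset$.

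The plan is to use the standard fact that a circuit and a cocircuit never meet in exactly one element. Here $b\in C\cap D$, so $C\cap D$ contains some element $a\neq b$. Elements of $B-b$ lie in $H$, so $a\notin B$, giving $a\in C\setminus B$ with $B-b+a$ a base.

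To keep things self-contained I would prove this intersection fact directly in the present setting rather than cite it. Suppose, for contradiction, that $C\setminus\{b\}\subseteq H=\cl_M(B-b)$. Since $C$ is a circuit, $b\in\cl_M(C-b)$. Monotonicity and idempotence of closure then give
\[
b\in\cl_M(C-b)\subseteq\cl_M(\cl_M(B-b))=\cl_M(B-b),
\]
which contradicts $r(B)=r(B-b)+1$, i.e.\ that $B$ is independent. Hence some $a\in C-b$ lies outside $\cl_M(B-b)$. That element cannot be in $B-b$, and it is not $b$, so $a\in C\setminus B$. Moreover $B-b+a$ is independent of size $r(E)$, hence a base.

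There is no serious obstacle. The only points needing care are the closure properties used (monotonicity, idempotence, and $e\in\cl(X)$ iff $r(X+e)=r(X)$), which follow from the rank axioms. The argument does not even need the fact that $D$ is a cocircuit.
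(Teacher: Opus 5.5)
Your proof is correct and, once the cocircuit framing is set aside, it is the same closure argument the paper uses: assuming $C-b\subseteq\cl_M(B-b)$ forces $b\in\cl_M(C-b)\subseteq\cl_M(B-b)$, which contradicts the independence of $B$. As you observe yourself, the cocircuit interpretation is only motivation and is not needed for the proof.
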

\begin{proof}
    Suppose not. Since $B\setminus \set{b}$ is independent and no element in $C\setminus B$ can be added to $B\setminus \set{b}$ while maintaining independence, we have $C\setminus B\subseteq \cl_M(B\setminus \set{b})$. This implies 
    \[
    b\in \cl_M(C\setminus\{b\})
    =\cl_M\bigl((C\setminus B)\cup(C\cap(B\setminus\{b\}))\bigr)
    \subseteq \cl_M(B\setminus\{b\}),
    \]
    which contradicts the independence of $B$.
\end{proof}

For a matroid $M=(E,\mathcal{I})$ and $X \subseteq E$, the \emph{restriction} to $X$, the deletion of $X$, and the contraction of $X$ are matroids defined by $M\mathbin{|}X \coloneqq (X,\, \Set{I \subseteq X}{I \in \mathcal{I}})$, $M\backslash X \coloneqq (E\setminus X,\, \Set{I \in \mathcal{I}}{I \cap X = \emptyset})$, and $M/X \coloneqq (E\setminus X,\, \Set{I \subseteq E\setminus X}{I \cup B \in \mathcal{I}})$, respectively, where $B$ is any base of $M\mathbin{|}X$.
The \emph{direct sum} of matroids $M_1=(E_1,\mathcal{I}_1)$ and $M_2=(E_2,\mathcal{I}_2)$ with $E_1\cap E_2=\emptyset$ is $M_1\oplus M_2\coloneqq (E_1\cup E_2,\Set{I_1\cup I_2}{I_1\in\mathcal{I}_1,\ I_2\in\mathcal{I}_2})$.
A (\emph{connected}) \emph{component} of $M$ is an equivalence class under the equivalence relation $\sim$ on $E$ defined by $e\sim e'$ if either $e=e'$ or $M$ has a circuit containing $\{e,e'\}$.
If $E_1,\ldots,E_m$ are the components of $M$, then $M=(M\mathbin{|}E_1)\oplus\cdots\oplus (M\mathbin{|}E_m)$; in particular, every base of $M$ is the disjoint union of bases of $M\mathbin{|}E_1,\ldots,M\mathbin{|}E_m$.

Let $A$ be an arbitrary base of $M=(E,\mathcal{I})$, the \emph{exchangeability graph} $G(A)$ is the bipartite graph where the vertex bipartition is $(A, E\setminus A)$, and each $e \in A$ and $f \notin A$ are adjacent if $A-e+f$ is a base.
The connected components of $G(A)$ for an arbitrary base $A$ are precisely the components of $M$~\cite{krogdahl1977dependence}.

For a matroid $M=(E,\mathcal{I})$, a labeling function $\ell \colon E \to G$, and a finite set \(F\subseteq G\), a base \(B\) of $M$ is \emph{\(F\)-avoiding} if \(\ell(B)\notin F\).
Given a weight function $c\colon E \to \R$ in addition, a \emph{min-cost \(F\)-avoiding base} is one of minimum cost among all \(F\)-avoiding bases.
Let
\begin{align*}
    \ell(M) &\coloneqq \Set{\ell(B)}{\text{$B$ is a base of $M$}}, \\
    \ell_{\le \theta}(M) &\coloneqq \Set{\ell(B)}{\text{$B$ is a base of $M$ with $c(B)\le\theta$}}
\end{align*}
for $\theta \in \R$.

We use the following theorem from~\citep{scherk1955Distinct, kemperman1956Complexes, Kemperman1960SmallSumsets} (see also~\cite{lev2005restricted}).

\begin{theorem}[{Kemperman–Scherk~\citep{scherk1955Distinct, kemperman1956Complexes, Kemperman1960SmallSumsets}}]\label{thm:kemperman-scherk}
Let $G$ be an abelian group, $g\in G$ be a group element, and \(P,Q\subseteq G\) be finite nonempty subsets. If $g$ has a unique representation as \(p+q\) with \(p\in P\) and \(q\in Q\), then \(|P+Q|\ge |P|+|Q|-1\), where $P+Q \coloneqq \Set{p+q}{p \in P,\, q \in Q}$.
\end{theorem}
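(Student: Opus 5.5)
The plan is to normalize, and then run induction on $|Q|$ using the Dyson $e$-transform. First translate: write $g=p_0+q_0$ for the unique representation and replace $P,Q$ by $P-p_0$ and $Q-q_0$. Sizes are unchanged, $|P+Q|$ is unchanged, and we may assume $0\in P\cap Q$ with $0$ uniquely represented as $0+0$. Uniqueness of the representation of $0$ is equivalent to $P\cap(-Q)=\{0\}$. We then prove $|P+Q|\ge|P|+|Q|-1$ for all finite nonempty $P,Q$ satisfying this normalization, by induction on $|Q|$. The case $|Q|=1$ is trivial.

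For the inductive step we choose $e\in P$ with $Q+e\not\subseteq P$ and set
\[
P'\coloneqq P\cup(Q+e),\qquad Q'\coloneqq Q\cap(P-e).
\]
The standard properties follow directly.
\begin{itemize}
\item Any $x\in Q+e$ with $x\notin P$ corresponds bijectively to some $q\in Q$ with $q\notin P-e$. Hence $|P'|+|Q'|=|P|+|Q|$.
\item $P'+Q'\subseteq P+Q$. Indeed, $p+q'\in P+Q$ for $p\in P$, and $(q+e)+q'=q+(q'+e)\in Q+P$ because $q'+e\in P$.
\item $0\in Q'$ because $e\in P$, so $Q'$ is nonempty.
\item $|Q'|<|Q|$ by the choice of $e$.
\end{itemize}

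The key check is that the normalization survives, i.e.\ $P'\cap(-Q')=\{0\}$. Clearly $0$ lies in this intersection. We have $P\cap(-Q')\subseteq P\cap(-Q)=\{0\}$. Now suppose $q+e=-q'$ with $q\in Q$ and $q'\in Q'$. Then $q'+e=-q\in -Q$, and also $q'+e\in P$ by definition of $Q'$. So $q'+e\in P\cap(-Q)=\{0\}$, which gives $q=0$ and $e=-q'$. Then $e\in P\cap(-Q)=\{0\}$, so $q+e=0$. Induction then yields
\[
|P+Q|\ge|P'+Q'|\ge|P'|+|Q'|-1=|P|+|Q|-1.
\]

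The remaining obstacle is the existence of $e$. Suppose instead that $Q+e\subseteq P$ for every $e\in P$, that is, $P+Q\subseteq P$. Then $P+q\subseteq P$ for each $q\in Q$, and finiteness forces $P+q=P$. Hence $P-q=P$, and since $0\in P$ we get $-q\in P$. Thus $-q\in P\cap(-Q)=\{0\}$, so $Q=\{0\}$, which is the base case. This closes the induction.
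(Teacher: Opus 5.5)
Your proof is correct. Normalizing to $P\cap(-Q)=\{0\}$ is exactly equivalent to the uniqueness hypothesis. The Dyson $e$-transform $P'=P\cup(Q+e)$, $Q'=Q\cap(P-e)$ preserves $|P|+|Q|$, does not enlarge the sumset, and strictly shrinks $Q$. Your check that $P'\cap(-Q')=\{0\}$ is the one step beyond generic $e$-transform bookkeeping, and it goes through. The existence of $e$ is also right: $P+Q\subseteq P$ would put $Q$ inside $\stab(P)$, so $-Q\subseteq P$ because $0\in P$, which forces $Q=\{0\}$.

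The paper, by contrast, does not prove this statement. It quotes it from Scherk and Kemperman, with a pointer to Lev, and uses it only as a black box to obtain \eqref{eq:S-H} in the proofs of \cref{thm:weighted-counting} and \cref{thm:weighted-counting-block}. So your argument is less a competing route than a self-contained replacement for the citation; it is essentially the standard $e$-transform proof of Scherk's theorem. Citing keeps the paper short and focused on the matroid side, while your few lines would make the proximity theorem fully self-contained and elementary.

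Your proof also shows where uniqueness is used: once to keep the normalization under the transform, and once to exclude the periodic case $Q\subseteq\stab(P)$. That is the kind of periodicity the stabilizer factor in \cref{conj:schrijver_seymour} must account for, and which the paper's unique-label assumption sidesteps.
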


\section{Proximity Theorem}\label{sec:proximity}
This section is devoted to proving \cref{thm:proximity}.
Throughout this section, let $M=(E,\mathcal{I})$ be a matroid with rank function $r$, let $\ell\colon E\to G$ be a labeling function, let $c\colon E\to\R$ be a cost function, and suppose that $B$ is the unique min-cost $\ell(B)$-base.
Abusing notation, we also write $r$ for the rank of $M$ itself.
The case $r=0$ is immediate, so assume $r\ge1$ throughout this section. For $h\in G$, denote by $E_h\coloneqq\Set{e\in E}{\ell(e)=h}$.
\subsection{Counting colors in general matroids}\label{sec:coloful-bases}
In this subsection, we prove \cref{thm:weighted-counting}.
Denote by $B$ the unique min-cost $\ell(B)$-base throughout this subsection.

Order the elements in $B$ as $B=\{b_1,\ldots,b_r\}$ so that $c(b_1)\le\cdots\le c(b_r)$.
For $i\in[r+1]$, let $B_{<i}\coloneqq\{b_1,\ldots,b_{i-1}\}$, $B_{\ge i}\coloneqq B\setminus B_{<i}$, $M_i\coloneqq M/B_{<i}$, and $E_h^i\coloneqq E_h\setminus B_{<i}$ for $h\in G$.
For $i\in[r]$, let $B_{>i}\coloneqq B_{\ge i}\setminus\{b_i\}$ and define
\begin{align*}
R_h^i &\coloneqq \Set{e\in E_h^i}{c(e)\le c(b_i)}, \\
S_i   &\coloneqq \Set{\ell(A)}{A\text{ is a base of }M_i\text{ and }c(A)\le c(B_{\ge i})},\\
H_i   &\coloneqq \Set{h\in G}{b_i\in \cl_{M_i}(R_h^i)}.
\end{align*}
Finally, set $R_h^{r+1}\coloneqq E_h^{r+1}$ for $h\in G$ and $S_{r+1}\coloneqq\{0\}$.
In particular, $\ell(B_{\ge i})\in S_i$ and $\ell(b_i)\in H_i$, and then $g\coloneqq\ell(B_{\ge i})=\ell(B_{>i})+\ell(b_i)\in S_{i+1}+H_i$ for all $i\in [r]$. Below we show that such a representation of $g$ is unique.
\begin{lemma}\label{lemma:unique}
    For every $i\in [r]$, the element $g=\ell(B_{\ge i})$ has a unique representation as $g=s+h$ with $s\in S_{i+1}$ and $h\in H_i$. 
\end{lemma}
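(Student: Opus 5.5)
The plan is to argue by contradiction from the uniqueness hypothesis on $B$. Fix $i\in[r]$ and suppose $g=\ell(B_{\ge i})=s+h$ with $s\in S_{i+1}$ and $h\in H_i$. I will show that necessarily $s=\ell(B_{>i})$ and $h=\ell(b_i)$. The idea is to lift the pair $(s,h)$ to a base of $M$ with label $\ell(B)$ and cost at most $c(B)$. Uniqueness then forces this base to be $B$, and reading off its pieces gives the claim.

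\emph{Step 1 (unpack $s$).} By definition of $S_{i+1}$, there is a base $A'$ of $M_{i+1}=M_i/\{b_i\}$ with $\ell(A')=s$ and $c(A')\le c(B_{>i})$. Since $b_i$ belongs to the base $B_{\ge i}$ of $M_i$, it is not a loop of $M_i$, so $A'+b_i$ is a base of $M_i$. Note also that $b_i\notin A'$, because $b_i$ is not in the ground set of $M_{i+1}$.

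\emph{Step 2 (unpack $h$ and exchange).} Here $h\in H_i$ means $b_i\in\cl_{M_i}(R_h^i)$. I claim there is an element $a\in R_h^i$ such that $A'+a$ is a base of $M_i$.
\begin{itemize}
\item If $b_i\in R_h^i$ (this happens exactly when $\ell(b_i)=h$), take $a=b_i$.
\item Otherwise there is a circuit $C$ of $M_i$ with $b_i\in C\subseteq R_h^i\cup\{b_i\}$. Apply \cref{lemma:base-circuit} to the base $A'+b_i$, the circuit $C$ and the element $b_i$. This yields $a\in C\setminus(A'+b_i)\subseteq R_h^i$ with $(A'+b_i)-b_i+a=A'+a$ a base of $M_i$.
\end{itemize}
In both cases $\ell(a)=h$ and $c(a)\le c(b_i)$.

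\emph{Step 3 (lift to $M$ and use uniqueness).} Set $A''\coloneqq B_{<i}\cup A'\cup\{a\}$. This is a base of $M$, since $M_i=M/B_{<i}$ and $B_{<i}$ is independent. Its label is
\[
\ell(A'')=\ell(B_{<i})+s+h=\ell(B_{<i})+\ell(B_{\ge i})=\ell(B).
\]
Its cost satisfies $c(A'')\le c(B_{<i})+c(B_{>i})+c(b_i)=c(B)$. Because $B$ is the unique min-cost $\ell(B)$-base, every other $\ell(B)$-base has cost strictly larger than $c(B)$. Hence $A''=B$, and therefore $A'+a=B_{\ge i}$.

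Now $b_i\in B_{\ge i}$ and $b_i\notin A'$, so $a=b_i$. Consequently $A'=B_{>i}$, which gives $s=\ell(B_{>i})$ and $h=\ell(a)=\ell(b_i)$, proving uniqueness.

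The main obstacle is Step 2: producing, from the abstract condition $b_i\in\cl_{M_i}(R_h^i)$, a concrete element of label $h$ and cost at most $c(b_i)$ that completes $A'$ to a base. This is exactly where \cref{lemma:base-circuit} is used, together with the separate handling of the degenerate case $b_i\in R_h^i$, in which no circuit through $b_i$ inside $R_h^i\cup\{b_i\}$ need exist.
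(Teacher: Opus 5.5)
Your proof is correct and follows the paper's argument: you lift $(s,h)$ to a base $B_{<i}\cup A'\cup\{a\}$ of $M$ with label $\ell(B)$ and cost at most $c(B)$, then use the uniqueness of $B$ to read off $a=b_i$ and $A'=B_{>i}$. The only difference is how you produce $a$: the paper observes that $R_h^i\not\subseteq\cl_{M_i}(A')$ (otherwise $b_i\in\cl_{M_i}(A')$) and takes any $a\in R_h^i\setminus\cl_{M_i}(A')$, which avoids your case split and the appeal to \cref{lemma:base-circuit}.
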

\begin{proof}
    Pick an arbitrary $s\in S_{i+1}$ and $h\in H_i$ such that $g=s+h$. 
    By the definition of $S_{i+1}$, there exists a base $A$ of $M_{i+1}=M_i/b_i$ with $\ell(A)=s$ and $c(A)\le c(B_{>i})$, which implies $A \cup \set{b_i}$ is a base of $M_i$. By the definition of $H_i$, we have $b_i\in \cl_{M_i}(R_h^i)$. Therefore, $R_h^i\not\subseteq \cl_{M_i}(A)$ since otherwise $b_i\in \cl_{M_i}(R_h^i)\subseteq \cl_{M_i}(A)$ which is a contradiction. Then, there exists $e\in R_h^i$ such that $A\cup \set{e}$ is a base of $M_i$, which means $B'\coloneqq B_{<i}\cup A\cup \set{e}$ is a base of $M$. Moreover, $$\ell(B')=\ell(B_{<i})+\ell(A)+\ell(e)=\ell(B_{<i})+s+h=\ell(B_{<i})+g,$$
    which is the same as the label of $B$ by the definition of $g$. Further,
    \[
    c(B')=c(B_{<i})+c(A)+c(e)\le c(B_{<i})+c(B_{>i})+c(b_i)=c(B).
    \]
    Therefore, by the uniqueness of $B$, we have $B'=B$, which implies $A=B_{>i}$ and $e=b_i$. In particular, $s=\ell(B_{>i})$ and $h=\ell(b_i)$. This completes the proof that the representation of $g$ is unique as $g=\ell(B_{>i})+\ell(b_i)$.
\end{proof}

We are ready to prove \Cref{thm:weighted-counting}, which is the main theorem of this section, restated below for readability.

\weightedcounting*
\begin{proof}
    For every $i\in[r]$, $s\in S_{i+1}$, and $h\in H_i$, there is a base $A$ of $M_{i+1}$ with $\ell(A)=s$ and $c(A)\le c(B_{>i})$, and there is $e\in R_h^i$ such that $A\cup\{e\}$ is a base of $M_i$.
    Moreover, $\ell(A\cup\{e\})=s+h$ and $c(A\cup\{e\})\le c(B_{>i})+c(b_i)=c(B_{\ge i})$, so $S_{i+1}+H_i\subseteq S_i$.
    Since $\ell(B_{>i})+\ell(b_i)$ has a unique representation in $S_{i+1}+H_i$ by \Cref{lemma:unique}, \Cref{thm:kemperman-scherk} gives
    \begin{equation}\label{eq:S-H}
        |S_i|\ge |S_{i+1}+H_i|\ge |S_{i+1}|+|H_i|-1.
    \end{equation}
    
    Denote by $r_i$ the rank function of matroid $M_i$. Next, we claim for $i \in [r]$ and $h \in G$ that
    \begin{equation}\label{eq:rank-reduction}
        r_i(R_h^i)-r_{i+1}(R_h^{i+1})\le \mathbf{1}\{h\in H_i\}.
    \end{equation}
    For $i<r$, the inequality $c(b_i)\le c(b_{i+1})$ gives $R_h^i\setminus\{b_i\}\subseteq R_h^{i+1}$, and this inclusion also holds for $i=r$ by the definition $R_h^{r+1}=E_h^{r+1}$.
    Hence $R_h^i\cup\{b_i\}\subseteq R_h^{i+1}\cup\{b_i\}$.
    Since $M_{i+1}=M_i/b_i$,
    \[
    \begin{aligned}
        r_{i+1}(R_h^{i+1})=r_{i}(R_h^{i+1}\cup \set{b_i})-1\ge r_{i}(R_h^i\cup \set{b_i})-1=\begin{cases}
            r_i(R_h^i)-1&\quad \text{if }b_i\in \cl_{M_i}(R_h^i),\\
            r_i(R_h^i)&\quad \text{if }b_i\notin \cl_{M_i}(R_h^i).
        \end{cases}
    \end{aligned}
    \]
    This implies \eqref{eq:rank-reduction} by the definition of $H_i$. Therefore,
    \begin{align*}
        |\ell_{\le c(B)}(M)|=|S_1|\ge&\sum_{i=1}^r (|H_i|-1)+|S_{r+1}|\\
        =&\sum_{i=1}^r \sum_{h\in G} \mathbf{1}\{h\in H_i\}-r+1\\
        \ge&\sum_{i=1}^r \sum_{h\in G}\bigl(r_i(R_h^i)-r_{i+1}(R_h^{i+1})\bigr)-r+1\\
        =&\sum_{h\in G} r(R_h)-r+1.\qedhere
    \end{align*}
    Here, the first inequality follows from \eqref{eq:S-H}. The second inequality follows from \eqref{eq:rank-reduction}. The last equality follows from the facts that $R_h^1=R_h$, $M_1=M$, and $M_{r+1}=M/B$ has rank zero.
\end{proof}
We state the following simple form of the unweighted corollary.
\begin{corollary}\label{cor:unweighted-counting}
    Let $M=(E,\mathcal{I})$ be a matroid with rank function $r$, $G$ be an abelian group, $\ell:E\rightarrow G$ be a labeling, and $c:E\rightarrow \R$ be a cost function. Suppose that $B$ is the $\ell(B)$-base of $M$. Then, $$|\ell(M)|\ge \sum_{h\in G} r(E_h)-r(E)+1.$$
\end{corollary}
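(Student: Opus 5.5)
The plan is to deduce the corollary directly from \Cref{thm:weighted-counting} by choosing the cost function so that every filter in that theorem becomes trivial. The hypothesis should be read as saying that $B$ is the \emph{unique} $\ell(B)$-base of $M$, i.e., no other base has label $\ell(B)$. Since the statement does not depend on $c$, I would replace the given cost by the identically zero cost $c\equiv 0$ and apply \Cref{thm:weighted-counting} to this new instance.

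With $c\equiv 0$, every base has cost $0$. Hence every $\ell(B)$-base is a min-cost $\ell(B)$-base. Because $B$ is the only $\ell(B)$-base, $B$ is the unique min-cost $\ell(B)$-base, so the hypothesis of \Cref{thm:weighted-counting} holds.

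Next I would evaluate both sides of that theorem's conclusion.
\begin{itemize}
\item Since $c(B)=0$ and every base has cost $0\le c(B)$, we get $\ell_{\le c(B)}(M)=\ell(M)$.
\item Since $\min_{b\in B}c(b)=0$ (recall $r\ge 1$, so $B\neq\emptyset$) and every element has cost $0$, we get $R_h=E_h$ for each $h\in G$.
\end{itemize}
Substituting these into \Cref{thm:weighted-counting} gives exactly $|\ell(M)|\ge\sum_{h\in G}r(E_h)-r(E)+1$.

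The degenerate case $r=0$ is handled separately. Then $\ell(M)=\{0\}$ and every $r(E_h)=0$, so the inequality reads $1\ge 1$.

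There is no real obstacle here. The only point needing care is interpreting the hypothesis as uniqueness of $B$ among $\ell(B)$-bases; the zero-cost reduction is what makes that uniqueness match the uniqueness condition in \Cref{thm:weighted-counting}.
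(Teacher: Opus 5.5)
Your proposal is correct and matches the paper's intended derivation, which the paper leaves implicit: specialize \Cref{thm:weighted-counting} to $c\equiv 0$, so that uniqueness of $B$ among $\ell(B)$-bases supplies the hypothesis, $\ell_{\le c(B)}(M)=\ell(M)$, and $R_h=E_h$. Reading the hypothesis as uniqueness (the word ``unique'' is missing from the statement) is the right interpretation, and your separate treatment of $r=0$ is appropriate.
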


\subsection{Counting colors in block matroids}
In this subsection, we prove a refined version of \cref{thm:weighted-counting}, namely \cref{thm:weighted-counting-block}.
Throughout this subsection, assume that $M=(E,\mathcal{I})$ is a block matroid of rank $r$, where $E$ can be partitioned into bases $E=A\mathbin{\dot\cup}B$ such that \(A\) is a min-cost base, and \(B\) is the unique min-cost $\ell(B)$-base.
We use the same notations as in \cref{sec:coloful-bases} for $b_i$, $E_h$, $M_i$, $R_h^i$, $S_i$, and $H_i$.

We start with several lemmas.

\begin{lemma}\label{lemma:circuit-max-cost}
    For every circuit $C$ of $M$, we have $\argmax\Set{c(e)}{e \in C} \cap B \ne \emptyset$.
\end{lemma}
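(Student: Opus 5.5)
Suppose for contradiction that some circuit $C$ has every maximum-cost element in $C\setminus B$. Since $E=A\mathbin{\dot\cup}B$, this means every element of $\argmax\Set{c(e)}{e\in C}$ lies in $A$. The idea is to use $C$ to build another $\ell(B)$-base of cost at most $c(B)$, which contradicts the uniqueness of $B$. The tempting tool is the min-cost property of $A$, but that gives the wrong inequality direction, so the argument has to go through $B$ and use an exchange inside $C$ together with a counting step on labels.

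First step: use the min-cost property of $A$. For any $x\in C\cap A$, if $C\cap B\neq\emptyset$ then \Cref{lemma:base-circuit} applied to base $A$ and circuit $C$ gives some $y\in C\setminus A=C\cap B$ with $A-x+y$ a base. Hence $c(y)\ge c(x)$. Taking $x$ to be a maximum-cost element of $C$ (which lies in $A$) gives $c(y)\ge \max_{e\in C} c(e)$. Therefore $y$ is also a maximum-cost element of $C$, and $y\in B$. This already contradicts the assumption.

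It remains to rule out $C\cap B=\emptyset$, that is, $C\subseteq A$. This is impossible because $A$ is independent. So in all cases $C\cap B\neq\emptyset$, and the argument above applies.

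On closer inspection, then, the proof is short and uses only that $A$ is a min-cost base and $E=A\cup B$. Take $x\in\argmax_{e\in C}c(e)$. If $x\in B$ we are done. Otherwise $x\in A$. Since $A$ is independent, $C\not\subseteq A$, so $C\cap B\neq\emptyset$. Applying \Cref{lemma:base-circuit} to the base $A$, the circuit $C$, and the element $x\in A\cap C$ yields $y\in C\setminus A\subseteq B$ with $A-x+y$ a base. Minimality of $c(A)$ gives $c(y)\ge c(x)$, so $y\in\argmax_{e\in C}c(e)\cap B$. The only point needing care is orienting \Cref{lemma:base-circuit} correctly: the exchange is applied to $A$ (the base, with $b:=x$), not to $B$. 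No use of the uniqueness of $B$ is needed.
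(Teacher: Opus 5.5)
Your final argument is correct and is the paper's proof: you apply \Cref{lemma:base-circuit} to the min-cost base $A$ at a maximum-cost element $x\in C\cap A$, obtain $y\in C\setminus A\subseteq B$ with $A-x+y$ a base, and use minimality of $c(A)$ to force $c(y)\ge c(x)$; the paper runs the same step as a contradiction, $c(A-x+y)<c(A)$. You should delete the opening remarks claiming that the min-cost property of $A$ ``gives the wrong inequality direction'' and that a label-counting step is needed, since your own argument uses neither labels nor the uniqueness of $B$.
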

\begin{proof}
    Suppose not. Then, every max-cost element of $C$ is in $C\cap A$. Pick an arbitrary max-cost element $a\in C\cap A$. Since $A$ is a base of $M$, it follows from \Cref{lemma:base-circuit} that there exists $b\in C\setminus A$ such that $A'\coloneqq A-a+b$ is a base. However, $c(b)<c(a)$, and thus $c(A')<c(A)$, which is a contradiction to the minimality of $A$.
\end{proof}

\begin{lemma}\label{lemma:independency}
    $E_h$ is independent for every $h \in G$.
\end{lemma}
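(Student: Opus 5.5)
The plan is to argue by contradiction: assume some color class $E_h$ is dependent, and use a circuit inside it to build a second min-cost $\ell(B)$-base, contradicting the uniqueness of $B$.

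\emph{Step 1 (a monochromatic circuit with a costliest element in $B$).} If $E_h$ is dependent, it contains a circuit $C$ of $M$, and every element of $C$ has label $h$. By \Cref{lemma:circuit-max-cost}, some max-cost element $b$ of $C$ lies in $B$, so $b\in B\cap C$ and $c(e)\le c(b)$ for all $e\in C$.

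\emph{Step 2 (exchange into $B$).} Apply \Cref{lemma:base-circuit} to the base $B$, the circuit $C$, and the element $b\in B\cap C$. This yields $a\in C\setminus B$ such that $B'\coloneqq B-b+a$ is a base. Since $a\neq b$, we have $B'\neq B$.

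\emph{Step 3 (compare label and cost).} Both $a$ and $b$ lie in $C\subseteq E_h$, so $\ell(a)=\ell(b)=h$. Hence $\ell(B')=\ell(B)$. Also $c(a)\le c(b)$ by the choice of $b$, so $c(B')\le c(B)$. Thus $B'$ is an $\ell(B)$-base of cost at most $c(B)$ that differs from $B$, contradicting the assumption that $B$ is the unique min-cost $\ell(B)$-base. Therefore every $E_h$ is independent.

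The only step needing care is Step 1. An arbitrary element of $B\cap C$ would not suffice, since the exchange could then raise the cost. This is exactly why \Cref{lemma:circuit-max-cost} is used: it relies on the minimality of $A$ and the block structure $E=A\mathbin{\dot\cup}B$ to guarantee that a costliest element of $C$ lies in $B$. With that lemma available, the argument is a single exchange.
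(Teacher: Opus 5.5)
Your proof is correct and matches the paper's argument step for step. Like the paper, you take a circuit $C\subseteq E_h$, use \Cref{lemma:circuit-max-cost} to find a costliest element $b\in C\cap B$, exchange it via \Cref{lemma:base-circuit} for some $a\in C\setminus B$, and conclude that $B-b+a$ is a different $\ell(B)$-base of cost at most $c(B)$, which contradicts the uniqueness of $B$.
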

\begin{proof}
    Suppose the statement is false for some $h\in G$. Then, there exists a circuit $C\subseteq E_h$. By \Cref{lemma:circuit-max-cost}, $C$ has a max-cost element $b\in C\cap B$. It follows from \Cref{lemma:base-circuit} that there exists $a\in C\setminus B$ such that $B'=B-b+a$ is a base. Moreover, since $a,b\in C\subseteq E_h$, $a$ and $b$ have the same label $h$, and thus $\ell(B')=\ell(B)$. Further, $c(B')\le c(B)$ as $b$ is a max-cost element in circuit $C$. This is a contradiction to the uniqueness of $B$.
\end{proof}

\begin{lemma}\label{lemma:closure}
    For every $i\in [r]$ and $h \in G$, $b_i\in \cl_{M_i}(E_h^i)$ if and only if $b_i\in \cl_{M_i}(R_h^i)$.
\end{lemma}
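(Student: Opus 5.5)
The direction ``if'' is immediate: $R_h^i\subseteq E_h^i$, so $\cl_{M_i}(R_h^i)\subseteq \cl_{M_i}(E_h^i)$. For ``only if'' I argue by contradiction. Suppose $b_i\in\cl_{M_i}(E_h^i)$ but $b_i\notin\cl_{M_i}(R_h^i)$. If $\ell(b_i)=h$, then $b_i\in R_h^i$ (its cost is at most $c(b_i)$), which is absurd. So assume $\ell(b_i)\neq h$, i.e.\ $b_i\notin E_h^i$.

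Then there is a circuit $C$ of $M_i$ with $b_i\in C\subseteq E_h^i\cup\{b_i\}$. I will use the standard fact about contractions: since $M_i=M/B_{<i}$ and $B_{<i}$ is independent, there is a circuit $C'$ of $M$ with $C\subseteq C'\subseteq C\cup B_{<i}$. Because $b_i\notin\cl_{M_i}(R_h^i)$, the set $C\setminus\{b_i\}$ is not contained in $R_h^i$. Hence $C$ contains some $e\in E_h^i\setminus R_h^i$, which satisfies $c(e)>c(b_i)$. Every element of $B_{<i}\cup\{b_i\}$ has cost at most $c(b_i)$. Therefore every max-cost element of $C'$ lies in $C\setminus\{b_i\}\subseteq E_h^i$ and has cost strictly greater than $c(b_i)$.

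Now apply \Cref{lemma:circuit-max-cost} to $C'$: some max-cost element $b$ of $C'$ lies in $B$. By the previous paragraph, $b\in E_h$, so $\ell(b)=h$, and $b=b_j$ for some $j>i$. By \Cref{lemma:base-circuit} applied to the base $B$, the circuit $C'$ and $b\in B\cap C'$, there is $a\in C'\setminus B$ with $B':=B-b+a$ a base. Since $B_{<i}\cup\{b_i\}\subseteq B$, we get $a\in C\setminus\{b_i\}\subseteq E_h$, so $\ell(a)=h=\ell(b)$ and $\ell(B')=\ell(B)$. Also $c(a)\le c(b)$ by maximality of $b$, so $c(B')\le c(B)$ while $B'\neq B$. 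This contradicts that $B$ is the unique min-cost $\ell(B)$-base.

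The step requiring the most care is lifting the circuit $C$ of the contraction $M_i$ to a circuit $C'$ of $M$. The added elements come only from $B_{<i}$, which lie in $B$ and are cheap, so they can be neither the max-cost element nor the exchange partner $a$. One also has to check that a max-cost element of $C'$ really exceeds $c(b_i)$; this is exactly where the assumption $b_i\notin\cl_{M_i}(R_h^i)$ is used.
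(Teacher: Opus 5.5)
Your proof is correct and follows essentially the same route as the paper. Both lift the circuit $C$ of $M_i$ to a circuit $C'\subseteq C\cup B_{<i}$ of $M$, use \Cref{lemma:circuit-max-cost} to place a max-cost element of $C'$ in $B_{>i}$, and exchange it via \Cref{lemma:base-circuit} for an equally labeled, no more expensive element, contradicting the uniqueness of $B$. The only difference is cosmetic: you assume $b_i\notin\cl_{M_i}(R_h^i)$ directly, whereas the paper shows that every element of $C$ costs at most $c(b_i)$ and then concludes $C\setminus\{b_i\}\subseteq R_h^i$.
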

\begin{proof}
    The reverse implication is immediate because $R_h^i\subseteq E_h^i$.
    For the forward implication, suppose that $b_i\in\cl_{M_i}(E_h^i)$.
    If $b_i\in E_h^i$, then $b_i\in R_h^i$, and the conclusion is immediate.
    Otherwise, there is a circuit $C$ of $M_i$ with $b_i\in C\subseteq E_h^i\cup\{b_i\}$.
    We claim that every element of $C$ has cost at most $c(b_i)$.
    Suppose not; then the maximum cost of an element in $C$ is greater than $c(b_i)$.
    Lift $C$ to a circuit $C'\subseteq C\cup B_{<i}$ of $M$.
    Since $c(b)\le c(b_i)$ for every $b\in B_{<i}$, the max-cost element of circuit $C'$ belongs to $C$. Then, it follows from \Cref{lemma:circuit-max-cost} that one can choose a max-cost element $e$ of $C'$ so that $e\in C'\cap B_{>i}$.
    By \Cref{lemma:base-circuit}, there is $e'\in C'\setminus B$ such that $B-e+e'$ is a base.
    Since $e,e'\in C\setminus\{b_i\}\subseteq E_h^i$, this base has label $\ell(B)$ and cost at most $c(B)$, contradicting the uniqueness of $B$.
    Thus $C\setminus\{b_i\}\subseteq R_h^i$, and hence $b_i\in\cl_{M_i}(R_h^i)$.
\end{proof}

In a block matroid under our assumptions, we can prove a refined version of \Cref{thm:weighted-counting}.
\begin{theorem}\label{thm:weighted-counting-block}
    Let $M=(E,\mathcal{I})$ be a rank-$r$ matroid where $E=A\mathbin{\dot\cup} B$, $G$ be an abelian group, $\ell:E\rightarrow G$ be a labeling, and $c:E\rightarrow \mathbb{R}$ be a cost function. Suppose that $A$ is a min-cost base and $B$ is the unique min-cost $\ell(B)$-base of $M$. Then, $|\ell_{\le c(B)}(M)|\ge r+1.$
\end{theorem}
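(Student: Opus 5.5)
We will run the same telescoping argument as in the proof of \Cref{thm:weighted-counting}, but replace the sets $R_h^i$ by the full color classes $E_h^i$; the extra structure of the block matroid is what makes this substitution legitimate. The inclusion $S_{i+1}+H_i\subseteq S_i$, the unique-representation statement \Cref{lemma:unique}, and hence inequality \eqref{eq:S-H} hold verbatim. They only use that $B$ is the unique min-cost $\ell(B)$-base.

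The chain then gives
\[
|\ell_{\le c(B)}(M)|=|S_1|\ge \sum_{i=1}^r(|H_i|-1)+1 .
\]
By \Cref{lemma:closure}, $H_i=\Set{h\in G}{b_i\in\cl_{M_i}(E_h^i)}$. We next establish the analogue of \eqref{eq:rank-reduction} for the full classes:
\[
r_i(E_h^i)-r_{i+1}(E_h^{i+1})\le \mathbf{1}\{h\in H_i\}.
\]
Since $E_h^{i+1}=E_h^i\setminus\{b_i\}$ and $M_{i+1}=M_i/b_i$, we have $r_{i+1}(E_h^{i+1})=r_i(E_h^{i}\cup\{b_i\})-1$. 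This equals $r_i(E_h^i)-1$ exactly when $b_i\in\cl_{M_i}(E_h^i)$, and $r_i(E_h^i)$ otherwise, which is the claimed bound (in fact with equality). Telescoping, and using that $M_{r+1}$ has rank $0$, yields
\[
|\ell_{\le c(B)}(M)|\ge \sum_{h\in G} r(E_h)-r+1 .
\]
This is stronger than \Cref{thm:weighted-counting}, since $R_h\subseteq E_h$.

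It remains to use the block structure. By \Cref{lemma:independency}, every color class $E_h$ is independent, so $r(E_h)=|E_h|$. Therefore
\[
\sum_h r(E_h)=|E|=|A|+|B|=2r,
\]
and the bound becomes $|\ell_{\le c(B)}(M)|\ge 2r-r+1=r+1$, as required.

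The main point to verify carefully is the replacement of $R_h^i$ by $E_h^i$ inside $H_i$. The Kemperman--Scherk step needs $H_i$ exactly as originally defined, with $R_h^i$. The rank telescoping, however, is naturally phrased with $E_h^i$. \Cref{lemma:closure} is precisely the bridge between the two, and the only subtlety is that the index bookkeeping $E_h^{i+1}=E_h^i\setminus\{b_i\}$ matches the contraction at every $i$. Once the block-matroid lemmas (\Cref{lemma:circuit-max-cost}, \Cref{lemma:independency}, \Cref{lemma:closure}) are granted, I expect no other obstacle.
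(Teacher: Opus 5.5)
Your proposal is correct and follows the paper's proof essentially step for step. Both arguments reuse the Kemperman--Scherk chain \eqref{eq:S-H}, derive $r_i(E_h^i)-r_{i+1}(E_h^{i+1})=\mathbf{1}\{h\in H_i\}$ via \Cref{lemma:closure}, telescope, and then use \Cref{lemma:independency} to get $\sum_h r(E_h)=|E|=2r$. One small caveat: your intermediate bound $\sum_h r(E_h)-r+1$ depends on \Cref{lemma:closure}, and hence on the block structure with $A$ min-cost, so it is ``stronger than'' \Cref{thm:weighted-counting} only within this setting.
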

\begin{proof}
    Let $r_i$ be the rank function of $M_i$.
    We claim that for each $i\in [r]$ and $h \in G$,
    \begin{equation}\label{eq:rank-diff}
        r_i(E_h^{i})-r_{i+1}(E_h^{i+1})=\mathbf{1}\{h\in H_i\}.
    \end{equation}
    Indeed,
    \[
    \begin{aligned}
        r_{i+1}(E_h^{i+1})&=r_i(E_h^{i+1}\cup \set{b_i})-1=r_i(E_h^{i}\cup \set{b_i})-1
        =r_i(E_h^{i})-\mathbf{1}\{b_i\in \cl_{M_i}(E_h^i)\}\\
        &=r_i(E_h^{i})-\mathbf{1}\{b_i\in \cl_{M_i}(R_h^i)\}=r_i(E_h^{i})-\mathbf{1}\{h\in H_i\},
    \end{aligned}
    \]
    where we use \Cref{lemma:closure} for the penultimate equality.

    Therefore,
    \[
    \begin{aligned}
        |\ell_{\le c(B)}(M)|=|S_1|
        &\ge\sum_{i=1}^r (|H_i|-1)+|S_{r+1}|\\
        &=\sum_{i=1}^r \sum_{h\in G} \mathbf{1}\{h\in H_i\}-r+1\\
        &=\sum_{i=1}^r \sum_{h\in G}\bigl(r_i(E_h^i)-r_{i+1}(E_h^{i+1})\bigr)-r+1\\
        &=\sum_{h\in G} r(E_h)-r+1\\
        &=\sum_{h\in G} |E_h|-r+1\\
        &=2r-r+1\\
        &=r+1,
    \end{aligned}
    \]
    where the first inequality is from \eqref{eq:S-H}; the second equality  is \eqref{eq:rank-diff}; the fourth equality follows from \Cref{lemma:independency}. 
\end{proof}

\subsection{Proof of the proximity theorem}
We prove Proximity \Cref{thm:proximity}, restated below, following ideas from~\citep{liuXu2024Congruency}.

\proximity*
\begin{proof}
    We prove the theorem for matroid $M_0=(E_0,\mathcal{I}_0)$ and its min-cost base \(A_0\). If \(A_0\) is \(F\)-avoiding, it is a min-cost \(F\)-avoiding base, and we are done. Otherwise, among all min-cost \(F\)-avoiding bases choose \(B_0\) minimizing \(d(A_0,B_0)\). Delete elements not in $A_0\cup B_0$ and contract elements in $J\coloneqq A_0\cap B_0$. Denote the resulting matroid by $M=(E,\mathcal{I})$ and by $A\coloneqq A_0\setminus J$, $B\coloneqq B_0\setminus J$. Then $M$ is a block matroid whose ground set can be partitioned into bases $E=A \mathbin{\dot\cup} B$. Let $r\coloneqq |A|=|B|$ be the rank of $M$. Let $F'\coloneqq F-\ell(J)$ be the translated forbidden set.

    We claim that $A$ is a min-cost base and $B$ is the unique min-cost $F'$-avoiding base of $M$. Indeed, for every base $A'$ of $M$, $A_0'\coloneqq A'\cup J$ is a base of $M_0$, and then $c(A')=c(A'_0\setminus J)\ge c(A_0\setminus J)=c(A)$. For every $F'$-avoiding base $B'\neq B$ of $M$, $B_0'\coloneqq B'\cup J$ is a base of $M_0$. Moreover, $\ell(B_0')=\ell(B')+\ell(J)\notin F'+\ell(J)=F$. Since $B_0$ is a min-cost $F$-avoiding base in $M_0$, $c(B')=c(B'_0\setminus J)\ge c(B_0\setminus J)=c(B)$. Further, since $E=A\mathbin{\dot\cup} B$ and $B'\neq B$, we have $B'\cap A\neq\emptyset$, which implies
    $$d(A_0,B_0')=|A_0\setminus B_0'|=|A\setminus B'|<|A|=d(A_0,B_0).$$
    Therefore, it has to be $c(B')>c(B)$, since otherwise $B'_0$ is also a min-cost $F$-avoiding base in $M_0$, but its distance to $A_0$ is smaller, which is a contradiction to the choice of $B_0$. This completes the proof that $B$ is the unique min-cost $F'$-avoiding base in $M$. In particular, $B$ is the unique min-cost $\ell(B)$-base of $M$.

    Recall that $E_h=\Set{e\in E}{\ell(e)=h}$ for every $h\in G$. It follows from \Cref{lemma:independency} that $E_h$ is independent for every $h\in G$. Applying \Cref{thm:weighted-counting-block} to $M$ and the bases $A,B$, we obtain
    \[
    |\ell_{\le c(B)}(M)|\ge r+1.
    \]
    Since $B$ is the unique min-cost $F'$-avoiding base in $M$, all labels in $\ell_{\le c(B)}(M)$ but $\ell(B)$ are in $F'$. Therefore, $r+1\le |\ell_{\le c(B)}(M)|\le |F'|+1=|F|+1$. Then, $d(A_0,B_0)=d(A,B)=r\le |F|$, which is the desired bound.
\end{proof}

\section{FPT algorithms for finding \texorpdfstring{$F$}{F}-avoiding bases}\label{sec:fpt}

Throughout this section, we denote the size of $E$ by $n$ and the rank of $M$ by $r$. When $|F|$ is fixed, \cref{thm:proximity} gives a polynomial-time local-search algorithm for finding a min-cost $F$-avoiding base.
Namely, the algorithm finds a min-cost base $A$ by the greedy algorithm, enumerates all the bases $B$ with $d(A,B) \le |F|$, and outputs a min-cost $F$-avoiding base among them.
Since the number of bases $B$ with $d(A,B) \le |F|$ is at most $\sum_{k=0}^{|F|} \tbinom{|A|}{k}\tbinom{|E \setminus A|}{k} \le (|F|+1)\cdot n^{2|F|}$, the running time is polynomial when $|F|$ is fixed.
However, this enumeration algorithm is XP rather than FPT when parameterized by $|F|$.

In this section, we give FPT algorithms parameterized by $|F|$.
\cref{sec:signature-enumeration} presents a signature-enumeration subroutine common to the unweighted and weighted settings.
Using it, \cref{sec:unweighted-fpt} gives an FPT algorithm for finding an $F$-avoiding base in the unweighted case where $c \equiv 0$, and \cref{sec:weighted-fpt} extends it to the case of general $c$.

We assume that a matroid $M=(E,\mathcal{I})$ is given by an independence oracle, and independence queries and group operations in $G$ take unit time.
Let $T_{\mathrm{MI}}$ and $T_{\mathrm{WMI}}$ denote the running times of unweighted and weighted matroid intersection algorithms, respectively, for two rank-$r$ matroids over $n$ elements.
For example, one may take $T_{\mathrm{MI}}$ as $\widetilde O(nr^{5/6})$ and $\widetilde O(nr^{3/4})$ using the deterministic and randomized algorithms of Blikstad~\cite{blikstad2021Breaking}, respectively, and $T_{\mathrm{WMI}}=\widetilde O(nr^2)$ using the weighted matroid intersection algorithm of Brezovec, Cornu\'ejols, and Glover~\cite{brezovecCornuejolsGlover1986Two}.

\subsection{Signature enumeration}\label{sec:signature-enumeration}
The following lemma formalizes the signature-enumeration step in a form usable for our algorithms.
It restricts the exchanged elements to a prescribed label set $\Lambda\subseteq G$, so that the number of valid signatures is governed by $|\Lambda|$ and the proximity bound $d$. Recall that $\ell[X]=\Set{\ell(e)}{e\in X}$. 

\begin{lemma}\label{lem:fpt-profile}
    Let $A$ be a base, let $F,\Lambda \subseteq G$ be finite subsets, and let $d$ be a positive integer.
    There is an algorithm that finds a min-cost base $B$ among the ones satisfying
    \begin{equation}\label{eq:signature-Lambda}
        B \text{ is $F$-avoiding},\quad \ell[A \mathbin{\triangle} B]\subseteq\Lambda,\quad \text{and}\quad \|\sigma(A)-\sigma(B)\|_1 \le 2d,
    \end{equation}
    or correctly reports that no such base exists in $O\bigl(\tbinom{|\Lambda|+d}{d}^2\bigl(T_{\mathrm{WMI}}+|F|\bigr)\bigr)$ time.
    When $c\equiv0$, $T_{\mathrm{WMI}}$ can be replaced by $T_{\mathrm{MI}}$.
\end{lemma}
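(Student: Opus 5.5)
Since the constraints in \eqref{eq:signature-Lambda} depend on $B$ only through its signature, we enumerate candidate signatures and solve one weighted matroid intersection per candidate.

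The first step rewrites the three conditions. The condition $\ell[A\mathbin{\triangle}B]\subseteq\Lambda$ says that $B$ agrees with $A$ on every element whose label lies outside $\Lambda$: it must contain $A\cap\ell^{-1}(G\setminus\Lambda)$ and avoid $(E\setminus A)\cap\ell^{-1}(G\setminus\Lambda)$. This forces $\sigma_h(B)=\sigma_h(A)$ for $h\notin\Lambda$.

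So let $A_{\mathrm{out}}\coloneqq\Set{e\in A}{\ell(e)\notin\Lambda}$ and $E'\coloneqq\Set{e\in E}{\ell(e)\in\Lambda}$. The bases $B$ satisfying the second condition are exactly $A_{\mathrm{out}}\cup B'$, where $B'$ is a base of $M'\coloneqq (M/A_{\mathrm{out}})\mathbin{|}E'$. Here $A_{\mathrm{out}}$ is independent and $A\setminus A_{\mathrm{out}}$ is a base of $M'$, so $M'$ has rank $r'\coloneqq|A\cap E'|$.

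Under this correspondence, $\ell(B)=\ell(A_{\mathrm{out}})+\sum_{h\in\Lambda}b_h h$ and $\|\sigma(A)-\sigma(B)\|_1=\|b-\sigma_\Lambda(A)\|_1$, where $b\coloneqq(\sigma_h(B'))_{h\in\Lambda}$. Both the $F$-avoiding condition and the distance condition therefore depend only on the vector $b\in\Z_{\ge0}^{\Lambda}$ with $\sum_h b_h=r'$.

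The second step enumerates these vectors. Write $b-\sigma_\Lambda(A)=u-v$ with $u,v\ge0$ of disjoint support. Since $\sum_h b_h=r'=\sum_h\sigma_h(A)$ over $\Lambda$, we have $\|u\|_1=\|v\|_1\le d$. The number of $u\in\Z_{\ge0}^\Lambda$ with $\|u\|_1\le d$ is $\binom{|\Lambda|+d}{d}$, and the same holds for $v$, so there are at most $\binom{|\Lambda|+d}{d}^2$ candidate pairs $(u,v)$.

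For each pair we compute $b=\sigma_\Lambda(A)+u-v$ and discard it if it has a negative entry. We then test whether $\ell(A_{\mathrm{out}})+\sum_{h\in\Lambda}b_h h\notin F$; precomputing $\ell(A_{\mathrm{out}})$ and updating the sum incrementally keeps this test within the $O(|F|)$ membership-check budget.

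For each surviving $b$, we find a min-cost common base of $M'$ and the partition matroid on $E'$ with parts $E'_h$ and capacities $b_h$, using weighted matroid intersection in time $T_{\mathrm{WMI}}$. If $c\equiv0$, any common base suffices, so time $T_{\mathrm{MI}}$ suffices. Independence oracles for $M'$ come directly from the oracle for $M$ by adding $A_{\mathrm{out}}$ to the queried set. Among all candidates we return $A_{\mathrm{out}}\cup B'$ of least cost, or report that none exists if every intersection fails.

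Correctness follows from the bijection in the first step: every feasible $B$ restricts to some $B'$ whose signature $b$ is enumerated, and every $B'$ found gives a feasible $B$. The main obstacle I expect is bookkeeping rather than mathematics. It lies in counting the candidate vectors correctly, via the $u$–$v$ decomposition, to obtain the stated binomial bound, and in checking that constructing $M'$ and the partition matroid does not exceed the per-candidate cost. Because the rank of $M'$ is at most $r$ and its ground set has at most $n$ elements, the bound $T_{\mathrm{WMI}}$ defined for two rank-$r$ matroids on $n$ elements applies.
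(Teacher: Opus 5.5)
Your proof is correct and is essentially the paper's argument. The paper encodes agreement with $A$ outside $\Lambda$ by adding to the partition matroid $N_b$ on all of $E$ a full-capacity part $\bigcup_{g\notin\Lambda}A_g$ and a zero-capacity part for the remaining elements, which is equivalent to your contraction of $A_{\mathrm{out}}$ and restriction to $E'$, and it uses the same $u$--$v$ count. One small slip: when enumerating pairs $(u,v)$ you must also discard those with $\|u\|_1\neq\|v\|_1$ (i.e., $\sum_{h\in\Lambda}b_h\neq r'$), not only those with negative entries, since otherwise a base of $M'$ independent in the partition matroid need not have signature $b$, and your $F$-avoidance test on $b$ would not apply to it.
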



\begin{proof}
    For $g\in G$, let $E_g\coloneqq \Set{e\in E}{\ell(e)=g}$ and $A_g\coloneqq \Set{e\in A}{\ell(e)=g}$. Note that every base $B$ with $\ell[A\mathbin{\triangle}B]\subseteq\Lambda$ satisfies $\sigma_g(B)=\sigma_g(A)$ for all $g\in G\setminus\Lambda$.
    We may assume without loss of generality that $0<|\Lambda|\le n$ by restricting $\Lambda$ to $\ell[E]$ and by noting that the case where $\Lambda=\emptyset$ is trivial.

    Recall that $d$-close valid signatures $b\in\Z_{\ge0}^G$ are the ones satisfying \eqref{eq:candidate}. Enumerate all $d$-close valid signatures $b$ that additionally satisfy
    \begin{equation}\label{eq:candidate-signature}
        b_g=\sigma_g(A)\ \forall g\in G\setminus\Lambda.
    \end{equation}
    For each such $b$, form a partition matroid $N_b$ on $E$ whose partition classes and capacities are: part $E_\lambda$ with capacity $b_\lambda$ for all $\lambda\in \Lambda$, a single part $\bigcup_{g\in G\setminus \Lambda}A_g$ with capacity equal to its size, and the part of all remaining elements with capacity zero. Find a min-cost common base of $M$ and $N_b$, if one exists.
    Run the above weighted matroid intersection algorithm for $M$ and every $N_b$, and return the one with the smallest cost, or report that no base satisfying \eqref{eq:signature-Lambda} exists if every instance is infeasible.

    We show that the union of common bases for all instances is precisely the set of bases satisfying \eqref{eq:signature-Lambda}, which proves the correctness of the algorithm. A common base $B$ of $M$ and $N_b$ for an instance $b$ has $\sigma(B)=b$; hence $\|\sigma(A)-\sigma(B)\|_1\le2d$ and $\ell(B)=\sum_{h\in G}b_h\cdot h\notin F$ according to \eqref{eq:candidate}. Moreover,
    \[
    r=|B|\le \sum_{\lambda\in \Lambda} b_\lambda+ \Big|\bigcup_{g\in G\setminus \Lambda} A_{\lambda}\Big|
    =\sum_{\lambda\in \Lambda} b_\lambda+\sum_{g\in G\setminus A} \sigma_g(A)=\sum_{h\in G}b_h= r,
    \]
    where the first equality follows from the fact that $B$ is a base of $M$; the first inequality follows from the fact that $B$ is independent in $N_b$; the penultimate equality follows from \eqref{eq:candidate-signature}; the last equality follows from \eqref{eq:candidate}. Therefore, equality holds throughout, which implies $B$ saturates every part: it contains all elements of $\bigcup_{g\in G\setminus\Lambda}A_g$, exactly $b_\lambda$ elements of $E_\lambda$ for each $\lambda\in \Lambda$, and nothing else. It follows that $A\mathbin{\triangle}B\subseteq \bigcup_{\lambda\in\Lambda}E_\lambda$, which means $\ell[A\mathbin{\triangle}B]\subseteq\Lambda$.
    Conversely, for every base $B$ satisfying \eqref{eq:signature-Lambda}, its signature $b:=\sigma(B)$ is $d$-close and valid by definition, and satisfies \eqref{eq:candidate-signature} because $A$ and $B$ only differ by elements with labels in $\Lambda$.

    Next, we bound the number of signatures $b$ satisfying \eqref{eq:candidate} and \eqref{eq:candidate-signature}.
    Each $b$ decomposes uniquely as $b=\sigma(A)-u+v$, where $u,v\in\Z_{\ge0}^\Lambda$ have disjoint supports and satisfy $\sum_{\lambda \in \Lambda}u_\lambda=\sum_{\lambda \in \Lambda}v_\lambda=\frac12\|\sigma(A)-b\|_1\le d$ by $\sum_{h\in G} b_h=r=\sum_{h\in G}\sigma_h(A)$ and \eqref{eq:candidate-signature}. Since the number of vectors in $\{x\in \Z_{\ge 0}^{\Lambda}: \sum_{\lambda \in \Lambda}x_\lambda\le d\}$ is exactly $\tbinom{|\Lambda|+d}{d}$, the number of signatures in the enumeration is at most $\tbinom{|\Lambda|+d}{d}^2$.
    
    Finally, for each $b$, computing $\sum_{g\in G}b_g\cdot g=\ell(A)+\sum_{\lambda\in\Lambda}\bigl(b_\lambda-\sigma_\lambda(A)\bigr)\lambda$ takes $O(n)$ group operations by $|\Lambda|\le n$, 
    checking the $F$-avoiding condition and constructing the partition matroid from the precomputed classes take $O(n+|F|)$ time, and a min-cost common base is found in $T_{\mathrm{WMI}}$ time.
    As $T_{\mathrm{WMI}}\ge n$, the total running time is as claimed.
\end{proof}

The algorithm of Liu and Xu corresponds to applying \cref{lem:fpt-profile} with $\Lambda=G$ and $d=|G|-1$.
The dependence on $|G|$ of its running time thus arises from the dimension of the signatures.
Consequently, 
a na\"ive extension of this approach to an arbitrary forbidden label set $F \subseteq G$ yields an algorithm that is FPT parameterized by $|G|$, not $|F|$.
Our algorithms below instead construct label sets $\Lambda$ whose sizes are bounded by functions of $|F|$.

\subsection{Unweighted algorithm}\label{sec:unweighted-fpt}
Here, for the case where $c \equiv 0$, we provide an FPT algorithm for finding an $F$-avoiding base parameterized by $|F|$.
Our idea is to either find an $F$-avoiding base directly, or modify the labeling so that the number of appearing group elements can be bounded by a function of $|F|$.

\begin{theorem}\label{thm:fpt}
    Let $M=(E,\mathcal{I})$ be a matroid with rank function $r$, let $G$ be an abelian group, let $F\subseteq G$ be finite, and let $\ell:E\rightarrow G$ be a labeling.
    There is an $O\bigl({|F|}^2 n^2 + \tbinom{{|F|}^2}{|F|}^2 \bigl(T_{\mathrm{MI}}+|F|\bigr)\bigr)$ time algorithm that decides whether $M$ has an $F$-avoiding base, and if so, returns one.
\end{theorem}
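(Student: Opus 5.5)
Let $D\coloneqq\Set{f-f'}{f,f'\in F}$, so $0\in D$ and $|D|\le |F|(|F|-1)+1\le |F|^2$ (for $|F|\ge1$; the case $F=\emptyset$ is trivial since any base works). We first compute an arbitrary base $A$ greedily. If $\ell(A)\notin F$ we return $A$.

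\textbf{Component test.} We compute the exchangeability graph $G(A)$, i.e., test $A-e+f$ for all $e\in A$, $f\notin A$. This uses $O(rn)$ oracle calls and gives the components of $M$ (Krogdahl). If some edge $\{e,f\}$ of $G(A)$ has $\ell(f)-\ell(e)\notin D$, then $\ell(A-e+f)-\ell(A)\notin D$. Since $\ell(A)\in F$, this forces $\ell(A-e+f)\notin F$, and we return $A-e+f$. We perform this check for every edge; it takes $O(rn)$ group operations.

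More generally, we must handle pairs within a component that are not adjacent. For each component $E_j$, we fix a reference element $e_j$. We BFS over $G(A)$ from $e_j$ and record potentials $p(e)=\ell(e)-\ell(e_j)$. If every edge has label difference in $D$, this alone does not yet show that all $p(e)\in D$, since differences can accumulate along paths. So we use a stronger check. For each pair $e,e'$ in the same component with $\ell(e)-\ell(e')\notin D$, we construct a base containing exactly one of them, with the other addable by a single exchange, via a circuit through both. Concretely, such a pair lies on a common circuit $C$. We extend $C-e'$ to a base $A'$; then $A'$ and $A'-e+e'$ are both bases whose labels differ by $\ell(e')-\ell(e)\notin D$, so one of them is $F$-avoiding. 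Finding all pairs and circuits naively costs $O(n^2)$ per component pair. Using the $|F|^2$ bound on $D$ and computing for each element its candidate differences, we aim for the stated $O(|F|^2n^2)$ budget.

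\textbf{Reduction.} If no such pair exists, then in each component $E_j$ every label $\ell(e)$ lies in $\ell(e_j)+D$. We define $\ell'(e)\coloneqq \ell(e)-\ell(e_j)\in D$ for $e\in E_j$. Every base has exactly $r(E_j)$ elements in $E_j$, so $\ell(B)=\ell'(B)+\sum_j r(E_j)\ell(e_j)$ for every base $B$. Setting $F'\coloneqq F-\sum_j r(E_j)\ell(e_j)$ gives an equivalent instance whose labels lie in $D$. By \cref{thm:proximity} with $c\equiv0$, if any $F$-avoiding base exists, one exists within distance $|F|$ of $A$. Its symmetric difference then has labels in $D$ and its signature is $|F|$-close. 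We apply \cref{lem:fpt-profile} with $\Lambda=D$ and $d=|F|$ in time $O\bigl(\binom{|F|^2+|F|}{|F|}^2(T_{\mathrm{MI}}+|F|)\bigr)$. This matches the stated bound up to constants in the binomial, adjusting $|\Lambda|\le |F|^2$ appropriately. Feasibility of the lemma's output gives the answer, and reporting none is correct by proximity.

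\textbf{Main obstacle.} The main difficulty is the component test: detecting efficiently, within $O(|F|^2n^2)$, a same-component pair whose label difference lies outside $D$, and producing the witnessing pair of bases. My first attempt would be to fix $e_j$. For each $e$ in the component, we find a circuit through $e_j$ and $e$ by repeated exchanges along a shortest path in $G(A)$, using the standard fact that exchanges along shortest paths are valid. Only pairs $(e_j,e)$ then need checking. If all $\ell(e)-\ell(e_j)\in F-f_0$ hold for a suitable $f_0$, the labels are bounded. The approach needs care to ensure that pairwise differences, not merely differences to the reference, are what the base witnesses detect.
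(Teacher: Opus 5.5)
Your correctness argument matches the paper's. You use the same set $D$ and the same witness: a base $A'$ containing $e$ but not $e'$ with $A'-e+e'$ a base, so the two labels differ by an element outside $D$ and one of them avoids $F$. You also use the same per-component shift into $D$, followed by \cref{thm:proximity} and \cref{lem:fpt-profile} with $\Lambda=D$ and $d=|F|$. The gap is the algorithmic half of the component test, and that half is part of what the theorem asserts. You assert that a circuit $C$ through $e$ and $e'$ exists, but never say how to compute it or the base $A'$. You also leave the running time at \emph{we aim for} and \emph{needs care}. As written, the proposal does not establish the stated time bound.

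The obstacle you flag is not real, for two reasons. First, detection needs no oracle calls. Once the components are known from $G(A)$ with $O(nr)$ queries, searching for a same-component pair whose label difference lies outside $D$ is pure group arithmetic costing $O(|D|n^2)$ time. Nothing has to be computed for pairs that pass. In fact, checking only the pairs $(e_j,e)$ already suffices: the reduction uses nothing beyond $\ell'(e)=\ell(e)-\ell(e_j)\in D$, and $D=-D$. So the condition to test is $\ell(e)-\ell(e_j)\in D$, not membership in some $F-f_0$, and your worry about pairwise differences versus differences to the reference is unfounded. Your edge test and BFS potentials are likewise superfluous.

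Second, a witness is needed for only one failing pair, after which the algorithm stops. The paper obtains it from a single matroid intersection of $M/e\backslash e'$ and $M/e'\backslash e$; loops form singleton components, so $e$ and $e'$ are not loops. A common base $I$ exists because $e$ and $e'$ lie on a common circuit. Then $I+e$ and $I+e'$ are bases whose labels differ by $\ell(e')-\ell(e)\notin D$. This costs one $T_{\mathrm{MI}}$, which is dominated by the second term of the bound. Your shortest-path idea would also work for this single pair, since a suitable exchange along a shortest $e$--$e'$ path in $G(A)$ is valid by Krogdahl's unique-perfect-matching lemma, but it is not needed.
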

\begin{proof}
    We may assume that $F \ne \emptyset$ and that $M$ is loopless, since the case $F=\emptyset$ is trivial and loops may be deleted. Define the set of difference of labels as $D \coloneqq  \Set{g-h}{g,h \in F}$. First, compute the components of $M$ by computing connected components of the exchangeability graph $G(A)$ for an arbitrary base $A$. Next, search for elements $e,e' \in E$ in the same component of $M$ such that $\ell(e)-\ell(e') \notin D$.
    If such elements exist, then $e \ne e'$ as $0 \in D$.
    Let $C$ be a circuit containing $\{e,e'\}$.
    As $C\setminus\{e'\}$ is independent, it extends to a base $A$, having $C$ as the fundamental circuit of $e'$.
    Hence, both $A$ and $B \coloneqq  A-e+e'$ are bases.
    Since $\ell(B)-\ell(A)=\ell(e')-\ell(e)\notin D$, at least one of $A$ and $B$ is $F$-avoiding.
    In an actual implementation, we apply a matroid intersection algorithm to $M/e\backslash e'$ and $M/e'\backslash e$.
    Since $e$ and $e'$ belong to the same component, these two matroids have a common base $I$.
    Then, $A \coloneqq  I+e$ is a base such that the associated fundamental circuit of $e'$ contains $e$.

    Suppose otherwise that any two elements in the same component have the label difference in $D$.
    Let $E_1,\dotsc,E_m$ be the components of $M$.
    Fix arbitrary $e_i \in E_i$ for $i \in [m]$ and define a new label function $\ell'\colon E \to G$ by $\ell'(e) \coloneqq  \ell(e) - \ell(e_i)$ for $e \in E_i$ and $i \in [m]$. Then, $\ell'$ only takes values in $D$. Moreover, 
    since every base $B$ of $M$ is the disjoint union of bases of $M\mathbin{|}E_i$ for $i \in [m]$, $\ell'(B)=\ell(B)-\sum_{i=1}^m r(E_i)\ell(e_i)$. Therefore, $B$ is $F$-avoiding under $\ell$ if and only if $B$ is $F'$-avoiding under $\ell'$, where $F' \coloneqq  \Set{g-\sum_{i=1}^m r(E_i) \ell(e_i)}{g \in F}$.

    The rest of the algorithm is the signature enumeration procedure from  \cref{sec:signature-enumeration}.
    Fix an arbitrary base $A$.
    By \cref{thm:proximity} with $c \equiv 0$, there is an $F'$-avoiding base $B$ under $\ell'$ with $d(A,B) \le |F'|=|F|$ if an $F'$-avoiding base exists.
    Run \cref{lem:fpt-profile} with the labeling $\ell'$, the base $A$, the forbidden set $F'$, the label set $\Lambda \coloneqq  D$, and the proximity bound $d \coloneqq  |F|$.
    Since $\ell'$ takes values in $D$, the label condition of \cref{lem:fpt-profile} is automatic, and hence \cref{lem:fpt-profile} finds a base if and only if an $F'$-avoiding base exists.
    If a base is found, return it, as it is $F$-avoiding under $\ell$; otherwise, report that no $F$-avoiding base exists.

    The running time is analyzed as follows.
    The components of $M$ can be found in $O(nr)$ time, and a pair $\{e,e'\}$ as above can be found by checking the label difference of $O(n^2)$ pairs which takes time $O(|D| n^2)$. 
    If such a pair $\{e,e'\}$ is found, we apply a matroid intersection algorithm once.
    Otherwise, \cref{lem:fpt-profile} takes $O\bigl(\tbinom{|D|+|F|}{|F|}^2\bigl(T_{\mathrm{MI}}+|F|\bigr)\bigr)$ time. By $|D| \le |F|^2$, we obtain the claimed complexity.
\end{proof}

\subsection{Weighted algorithm}\label{sec:weighted-fpt}

We now extend \cref{thm:fpt} to a general cost function $c$.

\begin{theorem}\label{thm:weighted-fpt}
    Let $M=(E,\mathcal{I})$ be a matroid with rank function $r$, let $G$ be an abelian group, let $F\subseteq G$ be finite, let $\ell:E\rightarrow G$ be a labeling, and let $c:E\rightarrow\R$ be a cost function. 
    There is an algorithm that either returns a min-cost $F$-avoiding base or reports that none exists in $2^{O(|F|^2)}\cdot O\bigl(n^2\,T_{\mathrm{WMI}}\bigr)$ time.
\end{theorem}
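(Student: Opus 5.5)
We follow the outline sketched in \cref{sec:techniques} and proceed by recursion on $|F|$.

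\textbf{Step 1: reduce to unique optima.} Perturb $c$ lexicographically, e.g.\ $c'(e)=c(e)+\varepsilon^{i(e)}$ for a fixed ordering of $E$, implemented symbolically by tie-breaking. Then every relevant optimum is unique, in particular the min-cost base $A$ (found greedily) and the min-cost $F'$-avoiding base for each $F'\subseteq F$. A min-cost $F$-avoiding base for $c'$ is also one for $c$.

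\textbf{Step 2: recursion.} If $A$ is $F$-avoiding, return it. Otherwise $\ell(A)\in F$. For each $f\in F$, recursively compute the min-cost $(F\setminus\{f\})$-avoiding base $B_f$, if it exists. Let $B$ be the unique min-cost $F$-avoiding base. By \cref{thm:proximity} and uniqueness, $d(A,B_f)\le|F|-1$ and $d(A,B)\le|F|$. Set $K\coloneqq\bigcup_f (A\symdiff B_f)$, so $|K|\le 2|F|^2$. Note that if $\ell(B_f)\ne f$ for some $f$, then $B_f$ is already $F$-avoiding and is a candidate. In any case, every $B_f$ that is $F$-avoiding is a candidate answer.

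\textbf{Step 3: label set.} For each $a\in A$, let $\Lambda_a$ be the closure of $\ell[K\cup\{a\}]$ under at most $2|F|$ steps, each adding or subtracting an element of $F-\ell(A)$. This gives $|\Lambda_a|\le|K+1|\cdot(2|F|+1)^{2|F|}=2^{O(|F|\log|F|)}$. Then run \cref{lem:fpt-profile} with $d=|F|$ and $\Lambda=\Lambda_a$, and return the cheapest base found over all $a$ and all candidates from Step 2. The binomial $\binom{|\Lambda|+|F|}{|F|}^2$ is $2^{O(|F|^2\log|F|)}$. To reach the stated $2^{O(|F|^2)}$, we would tighten $\Lambda$ to labels derivable by steps in $F-\ell(A)$ of bounded total count, or count signatures directly. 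The recursion has depth $|F|$ and branching $|F|$, which contributes $|F|^{|F|}$; combined with the factor $n$ from the choice of $a$ and the greedy/exchange work, this yields the bound $2^{O(|F|^2)}\,O(n^2T_{\mathrm{WMI}})$.

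\textbf{Step 4: correctness, the main obstacle.} We must show that some $a$ gives $\ell[A\symdiff B]\subseteq\Lambda_a$. By the perfect matching lemma (\cref{lem:perfect-matching}), pair $A\setminus B$ with $B\setminus A$. Each element $x\in A\symdiff B$ outside $K$ should be linked to $K\cup\{a\}$ by a short alternating chain of exchanges. Along such a chain, consecutive label differences are forced into $F-\ell(A)$ or its negative: otherwise a single exchange $A-y+z$ or $B-z+y$ of no larger cost would produce an $F$-avoiding or $(F\setminus\{f\})$-avoiding base, contradicting the optimality or uniqueness of $B$ or $B_f$. The key tool is Schrijver's exchange lemma for linking systems: it converts the multiple-exchange structure between $A$ and $B$ (and \cref{lem:multiple_exchange}) into a linking in which each element of $A\symdiff B$ is reached from a single anchor $a$ within $2|F|$ steps.

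The hard part is showing that the single-exchange differences lie in $F-\ell(A)$ (up to sign). First, we would use minimality of $B$ to argue that $B-z+y$ for a cheaper swap must have a label in $F$. Second, we would use that $A$ is min-cost with label in $F$, so that for $A-y+z$ with label outside $F$ the cost comparison still relates it to $B$. I expect the careful bookkeeping of which base the exchange is taken against, together with the application of the linking-system exchange lemma, to be the main difficulty.
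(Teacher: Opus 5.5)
Your skeleton (recursion on $F\setminus\{f\}$, the set $K$, the $2|F|$-step derivations $\Lambda_a$, and \cref{lem:fpt-profile} with $d=|F|$) matches the paper's. However, your algorithm omits a necessary step, and your correctness argument lacks its key idea.

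\textbf{The missing single swaps.} You never collect the $F$-avoiding single swaps $A-x+y$, and without them the algorithm is wrong already for $|F|=1$. If $F=\{f\}$ and $\ell(A)=f$, the recursive call returns $B_f=A$, so $K=\emptyset$, $F-\ell(A)=\{0\}$ and $\Lambda_a=\{\ell(a)\}$. Then \cref{lem:fpt-profile} can only return bases of label $\ell(A)\in F$. For example, on $U_{1,2}$ with labels $0,1$, costs $0,1$ and $F=\{0\}$, you report that no $F$-avoiding base exists. This is also where your Step 4 reasoning breaks. An exchange $A-x+y$ is forced to satisfy $\ell(y)-\ell(x)\in F-\ell(A)$ only when $c(A-x+y)\le c(B)$, i.e.\ $c(y)-c(x)\le\Delta\coloneqq c(B)-c(A)$ (a light pair). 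Even then, this holds only once the possibility $A-x+y=B$ is excluded, since the optimality or uniqueness of $B$ gives no contradiction in that case. That is why the paper enumerates single swaps explicitly, and with that step in place this part is easy rather than the main difficulty. Exchanges $B-z+y$ do not help, because they constrain differences to $F-\ell(B)$, not to the steps $F-\ell(A)$ used in $\Lambda_a$.

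\textbf{The missing mechanism in Step 4.} You do not explain why all of $A\symdiff B$ is reachable from $K\cup\{a\}$, and Schrijver's lemma alone does not produce a single anchor. The paper takes the components $Z_1,\dots,Z_m$ of the light-pair exchangeability graph restricted to $A\symdiff B$, and shows that every union of them is exchangeable for $A$:
with a perfect-matching bijection $\pi$ (\cref{lem:perfect-matching}), all pairs $(x,\pi(x))$ are light;
every arc inside a strongly connected component of $D^\pi(A)$ is light, because the increments $c(y_i)-c(x_{i+1})$ around a cycle are nonnegative (by minimality of $A$) and sum to at most $\Delta$;
and \cref{lem:fpt-schrijver}, applied to sink components, factors exchangeability over strongly connected components.

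The paper then splits into two cases.
\begin{itemize}
\item If every $Z_k$ meets $K$ or is joined to it by a light pair, the derivation of $\ell[K]$ already covers $\ell[A\symdiff B]$ within $|A\symdiff B|\le 2|F|$ steps.
\item If some $Z$ is separated from $K$, then $B\symdiff Z$ is a base cheaper than $B$, hence has some label $f\in F$. So $B_f$ has label $f$ and cost at most $c(B\symdiff Z)$. The same strongly-connected-component argument (no light pair joins $Z$ and $K$) shows $B_f\symdiff Z$ is a base of label $\ell(B)$ and cost at most $c(B)$, hence equals $B$.
\end{itemize}
This gives $A\symdiff B\subseteq K\cup Z$. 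It is precisely why one anchor $a\in Z\cap A$ suffices, and it is where the recursive solutions are really used.

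\textbf{Running time.} The loss in your count needs no change to $\Lambda_a$. By commutativity, an element of $\Lambda_a$ is determined by its start label and a multiset of $2|F|$ signed steps. Hence $|\Lambda_a|\le 2|F|^2\binom{4|F|}{2|F|}=2^{O(|F|)}$, and so $\binom{|\Lambda_a|+|F|}{|F|}^2=2^{O(|F|^2)}$.
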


\subsubsection{Preparation}
Towards proving \cref{thm:fpt}, we prepare lemmas on basis exchangeability. We call a subset $Z\subseteq E$ an \emph{exchangeable set} for a base $A$ if $A\symdiff Z$ is a base. In particular, this requires that $X\coloneqq Z\cap A$ and $Y\coloneqq Z\setminus A$ have the same cardinality. The following lemma from~\cite{schrijver1979Linking} is originally stated in for linking systems (also known as bimatroids), and translates to the form following through the correspondence between linking systems and matroids with a distinguished base~\cite[Theorem~3.2]{schrijver1979Linking}. We include a proof for completeness.

\begin{lemma}[{Schrijver~\cite[Section~4, Lemma]{schrijver1979Linking}}]\label{lem:fpt-schrijver}
    Let $A$ be a base, and let $Z_1,Z_2\subseteq E$ be disjoint subsets.
    Suppose that $A-x+y$ is not a base for all $x\in Z_1\cap A$ and $y\in Z_2\setminus A$.
    Then, $Z_1\cup Z_2$ is exchangeable for $A$ if and only if both $Z_1$ and $Z_2$ are exchangeable for $A$.
\end{lemma}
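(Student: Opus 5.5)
The plan is to reduce everything to the contraction $N\coloneqq M/K$, where $K\coloneqq A\setminus(Z_1\cup Z_2)$, and to read the hypothesis as a closure statement. Write $X_i\coloneqq Z_i\cap A$ and $Y_i\coloneqq Z_i\setminus A$ for $i=1,2$. Then $X_1\cup X_2$ is a base of $N$.

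For any $W\subseteq Z_1\cup Z_2$, the set $A\symdiff W$ contains $K$. Hence $A\symdiff W$ is a base of $M$ if and only if $(A\symdiff W)\setminus K$ is a base of $N$. So exchangeability of $Z_1$, $Z_2$ and $Z_1\cup Z_2$ for $A$ becomes the statement that $X_2\cup Y_1$, $X_1\cup Y_2$ and $Y_1\cup Y_2$, respectively, are bases of $N$.

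\textbf{Key step: translating the hypothesis.} For $y\in Y_2$, the fundamental circuit of $y$ with respect to $A$ contains exactly those $x\in A$ for which $A-x+y$ is a base. By hypothesis it therefore avoids $X_1$, so $y\in\cl_M(A\setminus X_1)=\cl_M(K\cup X_2)$. In $N$ this reads $Y_2\subseteq\cl_N(X_2)$. From this single fact both directions follow by rank counting.

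\textbf{The implication ($\Leftarrow$).} Assume $X_1\cup Y_2$ and $X_2\cup Y_1$ are bases of $N$.
\begin{itemize}
\item $Y_2$ is independent, lies in $\cl_N(X_2)$, and $|Y_2|=|X_2|$ because $X_1\cup Y_2$ and $X_1\cup X_2$ are both bases. Hence $\cl_N(Y_2)=\cl_N(X_2)$.
\item It follows that $\cl_N(Y_1\cup Y_2)=\cl_N(Y_1\cup X_2)=E(N)$, so $Y_1\cup Y_2$ is spanning.
\item We have $|Y_1\cup Y_2|=|X_1|+|X_2|=r(N)$, so a spanning set of this size is a base.
\end{itemize}

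\textbf{The implication ($\Rightarrow$).} Assume $Y_1\cup Y_2$ is a base of $N$. Here the only delicate point is that the cardinalities $|X_i|=|Y_i|$ are not given and must be derived; this is the main obstacle.
\begin{itemize}
\item Since $Y_2$ is independent and contained in $\cl_N(X_2)$, we get $|Y_2|\le|X_2|$.
\item Contract further by $X_2$. The matroid $N/X_2$ has rank $|X_1|$, and $Y_1$ remains independent in it because $Y_1\cup Y_2$ is independent with $Y_2\subseteq\cl_N(X_2)$. Hence $|Y_1|\le|X_1|$.
\item Combining these with $|Y_1|+|Y_2|=r(N)=|X_1|+|X_2|$ forces equality in both inequalities.
\end{itemize}
Then the argument of ($\Leftarrow$) runs in reverse. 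Because $|Y_2|=|X_2|$, we have $\cl_N(Y_2)=\cl_N(X_2)$. Therefore $X_2\cup Y_1$ spans $\cl_N(Y_1\cup Y_2)=E(N)$ and has size $r(N)$, so it is a base, which gives exchangeability of $Z_1$. Similarly, $X_1\cup Y_2$ spans the same closure as $X_1\cup X_2$ and has size $r(N)$, so it is a base, which gives exchangeability of $Z_2$.
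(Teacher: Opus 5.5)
Your reverse implication is correct, but the forward implication has a genuine gap at the step where you claim that $Y_1$ stays independent in $N/X_2$. Independence of $Y_1\cup Y_2$ together with $Y_2\subseteq\cl_N(X_2)$ only gives $r_N(Y_1\cup X_2)\ge |Y_1|+|Y_2|$, that is, $r_{N/X_2}(Y_1)\ge |Y_1|-(|X_2|-|Y_2|)$. To conclude $r_{N/X_2}(Y_1)=|Y_1|$ you would already need $|Y_2|=|X_2|$, which is what you are trying to prove.

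This cannot be repaired, because the forward direction of the statement as written is false. Take the rank-$1$ uniform matroid on $\{a,b\}$, with $A=\{a\}$, $Z_1=\{b\}$ and $Z_2=\{a\}$.
\begin{itemize}
\item The hypothesis is vacuous, since $Z_1\cap A=\emptyset$.
\item $A\symdiff(Z_1\cup Z_2)=\{b\}$ is a base.
\item Neither $A\symdiff Z_1=\{a,b\}$ nor $A\symdiff Z_2=\emptyset$ is a base.
\end{itemize}
In your notation $X_1=\emptyset$, $Y_1=\{b\}$, $X_2=\{a\}$, $Y_2=\emptyset$, and $b$ is a loop in $N/X_2$. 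So the equalities $|X_i|=|Y_i|$ cannot be derived and must be assumed.

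The paper's proof relies on the same unstated assumption: in its forward direction it writes $|Y_0|=|X_1|=|Y_1|$ without justification. This is harmless there, because the lemma is only applied in \cref{lem:fpt-scc-factorization} to unions of matched pairs $\{x,\pi(x)\}$, where $|Z_i\cap A|=|Z_i\setminus A|$ holds automatically.

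If you add the hypothesis $|Z_1\cap A|=|Z_1\setminus A|$ to the statement, your argument becomes complete. Together with $|Y_1|+|Y_2|=|X_1|+|X_2|$ it gives $|Y_2|=|X_2|$, hence $\cl_N(Y_2)=\cl_N(X_2)$, and your last paragraph then finishes the forward direction.

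With that fix, your route genuinely differs from the paper's. The paper proves the forward direction with Greene's multiple exchange lemma, followed by the perfect matching lemma to force $Y_0=Y_1$. It proves the reverse direction by extending $I\cup Y_2$ inside $D_1$ and counting a perfect matching. You instead contract $A\setminus(Z_1\cup Z_2)$ and get both directions from the single closure fact $Y_2\subseteq\cl_N(X_2)$ plus rank counting. This avoids the multiple exchange lemma entirely and is more elementary.
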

\begin{proof}
Let $B\coloneqq A\symdiff (Z_1\cup Z_2)$, and let $X_i\coloneqq A\cap Z_i$ and $Y_i\coloneqq B\cap Z_i$ for $i\in \{1,2\}$. Then, $\{X_1,X_2\}$ and $\{Y_1,Y_2\}$ bipartitions of $A \setminus B$ and $B \setminus A$, respectively, because $Z_1\cap Z_2=\emptyset$. Let $G\coloneqq G(A)[A \mathbin{\triangle} B]$ be the exchangeability graph of $A$ restricted to $A\mathbin{\triangle} B$. By hypothesis, there are no edges between $X_1$ and $Y_2$.

Suppose first that $Z_1\cup Z_2$ is exchangeable for $A$, which means that $B$ is a base. By the multiple exchange \Cref{lem:multiple_exchange}, there exists $Y_0\subseteq B\setminus A$
such that both $(A\setminus X_1)\cup Y_0$ and $(B\setminus Y_0)\cup X_1$ are bases. By the perfect matching \Cref{lem:perfect-matching}, $G$ contains a perfect matching between $X_1$
and $Y_0$. Since there are no edges between $X_1$ and $Y_2$,
we have $Y_0\subseteq Y_1$. As $|Y_0|=|X_1|=|Y_1|$, it follows
that $Y_0=Y_1$. The two bases above are therefore
$(A\setminus X_1)\cup Y_1$ and $(A\setminus X_2)\cup Y_2$.

Conversely, suppose that 
$D_i\coloneqq A\symdiff Z_i=(A\setminus X_i)\cup Y_i$ is a base for each $i\in \{1,2\}$. Set $I\coloneqq A\cap B$. The set $I\cup Y_2$ is independent,
being a subset of $D_2$. Extend $I\cup Y_2$ to a base $D\subseteq I\cup Y_2\cup X_2\cup Y_1$ by adding elements from $D_1$. 
Write $D=(A\setminus X_0)\cup Y_0$, where
$X_1\subseteq X_0\subseteq X_1\cup X_2$ and $Y_2\subseteq Y_0\subseteq Y_1\cup Y_2$.
By \Cref{lem:perfect-matching}, there is a perfect matching in $G$ between $X_0$ and $Y_0$. Every vertex of $Y_2$ must be matched into $X_0\setminus X_1$,
so $|X_0\setminus X_1|\ge |Y_2|=|X_2|$. Consequently, $X_0=X_1\cup X_2$. Since $|Y_0|=|X_0|$, we also
have $Y_0=Y_1\cup Y_2$. Hence $D=B$, proving that $B$ is a base.
\end{proof}

Let $A$ be a base, let $Z\subseteq E$ be a subset such that $X\coloneqq Z\cap A$ and $Y\coloneqq Z\setminus A$ have the same cardinality, and let $\pi:X\rightarrow Y$ be a bijection. Let $P\coloneqq\{(x,\pi(x)):x\in X\}$ be the pairing between $X$ and $Y$ defined by the bijection. Define the auxiliary digraph $D^\pi(A)$ on the pairs $P$ as follows: place an arc $(e,e')$ for distinct pairs $e=(x,y),e'=(x',y')\in P$ if and only if $A-x'+y$ is a base. 
For a subset $Q\subseteq P$, denote by $X(Q)\coloneqq\{x:(x,\pi(x))\in Q\}$, $Y(Q)\coloneqq\{\pi(x):(x,\pi(x))\in Q\}$, and $Z(Q)\coloneqq X(Q)\cup Y(Q)$. The following lemma characterizes the exchangeability of $Z$ in terms of the strongly connected components of $D^\pi(A)$.


\begin{lemma}\label{lem:fpt-scc-factorization}
    Let $A$ be a base, let $Z\subseteq E$ be a subset such that $X\coloneqq Z\cap A$ and $Y\coloneqq Z\setminus A$ have the same cardinality, and let $\pi:X\rightarrow Y$ be a bijection. Then, $Z$ is exchangeable for $A$ if and only if $Z(Q)$ is exchangeable for $A$ for every strongly connected component $Q$ of $D^\pi(A)$.
\end{lemma}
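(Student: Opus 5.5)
We will derive this from \Cref{lem:fpt-schrijver} by induction on the number of strongly connected components of $D^\pi(A)$. Call a set $U\subseteq P$ \emph{closed} if $D^\pi(A)$ has no arc leaving $U$. The strongly connected components admit a topological order $Q_1,\dots,Q_k$ in which every arc between different components goes from $Q_i$ to $Q_j$ with $i<j$. Note also that $A-x+\pi(x)$ need not be a base, and loops are absent by definition, so arcs only encode exchanges across distinct pairs; this will be exactly the hypothesis that \Cref{lem:fpt-schrijver} needs.

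The key step is to apply \Cref{lem:fpt-schrijver} to a split $P=U\,\dot\cup\,W$ in which no arc goes from $W$ to $U$. Set $Z_1\coloneqq Z(U)$ and $Z_2\coloneqq Z(W)$; these are disjoint, each contains equally many elements of $A$ and of $E\setminus A$, and $Z_1\cup Z_2=Z$. The lemma requires that $A-x+y$ is not a base for $x\in Z_1\cap A=X(U)$ and $y\in Z_2\setminus A=Y(W)$. Such an exchange would be an arc $(x'',y)\to(x,\pi(x))$, where $y=\pi(x'')$ with $(x'',y)\in W$ and $(x,\pi(x))\in U$. The two pairs are distinct because they lie in different parts, so this would be an arc from $W$ to $U$, which we excluded. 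Hence \Cref{lem:fpt-schrijver} gives that $Z$ is exchangeable if and only if both $Z(U)$ and $Z(W)$ are exchangeable.

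Now take $U\coloneqq Q_k$, the last component, and $W\coloneqq Q_1\cup\dots\cup Q_{k-1}$. No arc goes from $W$ to $U$, since arcs respect the order and go forward, so all arcs between the parts go from $U$ into $W$. One must check the direction convention carefully: the forbidden exchanges $A-x+y$ with $x\in X(U)$ and $y\in Y(W)$ correspond to arcs $W\to U$, so $U$ should be a sink-side set, i.e.\ a \emph{source} of the condensation with respect to incoming arcs. If the orientation turns out reversed, take $U=Q_1$ instead. Either way, one component splits off. Then we recurse on $Z(W)$ with the restricted pairing $\pi|_{X(W)}$. Its digraph is the induced subdigraph $D^\pi(A)[W]$, because arcs depend only on the two pairs involved, and its strongly connected components are $Q_1,\dots,Q_{k-1}$ since the $Q_i$ are unions of whole components. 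Induction then yields the claim. The base case $k=1$ is trivial, and the empty case is trivial because $A$ is a base.

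The main obstacle is only bookkeeping: matching the arc direction to the hypothesis of \Cref{lem:fpt-schrijver}, and verifying that the strongly connected components of the induced subdigraph on a union of components are exactly those components. Both checks are immediate from the definitions.
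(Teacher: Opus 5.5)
Your strategy matches the paper's. You argue by induction on the number of strongly connected components of $D^\pi(A)$ and split off one component at a time with \cref{lem:fpt-schrijver}. Your splitting criterion is also correct: with $Z_1=Z(U)$ and $Z_2=Z(W)$, the hypothesis of \cref{lem:fpt-schrijver} says exactly that no arc of $D^\pi(A)$ goes from $W$ to $U$.

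However, your concrete choice $U=Q_k$ violates this criterion under your own convention. If every arc between components goes from $Q_i$ to $Q_j$ with $i<j$, then arcs between the parts go from $W=Q_1\cup\dots\cup Q_{k-1}$ into $Q_k$, not from $Q_k$ into $W$. For example, take two pairs $p_1=(x_1,y_1)$ and $p_2=(x_2,y_2)$ with the single arc $p_1\to p_2$, so $A-x_2+y_1$ is a base but $A-x_1+y_2$ is not. Then $U=\{p_2\}$ would require $A-x_2+y_1$ not to be a base, which fails.

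Only your hedge rescues the argument, so commit to it. Take $U=Q_1$, a source of the condensation (no arc enters it), and recurse on $W=Q_2\cup\dots\cup Q_k$. Equivalently, keep the last component $Q_k$ but put it on the $Z_2$ side, with $W=Q_k$ and $U=P\setminus Q_k$. This is what the paper does: it splits off a sink component $Q$ via $Z_1=Z(P\setminus Q)$ and $Z_2=Z(Q)$.

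With this correction the rest of your argument goes through. The strongly connected components of the induced subdigraph on a union of components are exactly those components, because a path between two vertices of one component never leaves it. The base cases are trivial.
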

\begin{proof}
    Let $P\coloneqq\{(x,\pi(x)):x\in X\}$ be the pairing defined by $\pi$; then $Z=Z(P)$. We proceed by induction on the number of strongly connected components
    of $D^\pi(A)$.
    The assertion is immediate when this number is at most one.
    Suppose that there are at least two strongly connected components.
    Let $Q$ be a sink strongly connected component, which means there is no outgoing arc from $Q$ to $P'\coloneqq P\setminus Q$.
    Equivalently,
    $A-x+y$ is not a base for any $x\in X(Q)$ and $y\in Y(P')$.
    Applying \cref{lem:fpt-schrijver} with
    $X_1\coloneqq X(P')$, $Y_1\coloneqq Y(P')$, $X_2\coloneqq X(Q)$, and $Y_2\coloneqq Y(Q)$, we see that
    $Z(P)$ is exchangeable if and only if both
    $Z(Q)$ and $Z(P')$ are exchangeable. Further, letting $\pi'$ be $\pi$ restricted to $P'$, we have $D^{\pi'}(A)=D^\pi(A)[P']$. Since $D^{\pi'}(A)$ has fewer strongly connected components, the induction hypothesis implies that $Z(P')$ is exchangeable if and only if $Z(Q')$ is exchangeable for every strongly connected component $Q'$ of $D^{\pi'}(A)$, which are precisely the strongly connected components of $D^{\pi}(A)$ contained in $P'$. Combining these equivalences proves the lemma.
\end{proof}

This lemma essentially establishes the equivalence of exchanging individual strongly connected components of $D^\pi(A)$ and the union of them. We remark that when the pairing $P=\{(x,\pi(x)):x\in A\}$ forms the unique perfect matching of the exchangeability graph of $A$, the digraph $D^\pi(A)$ is acyclic, and thus every strongly connected component is a singleton. Hence, \cref{lem:fpt-scc-factorization} specializes to the classical unique perfect matching lemma of Krogdahl~\cite{krogdahl1977dependence}.
The analogous factorization over the connected components of such a bipartite graph appears as Theorem~4.4 of~\cite{schrijver1979Linking}.

\subsubsection{Algorithm}

Throughout this and the following subsections, we assume that distinct bases have distinct costs.
To achieve this, fix an arbitrary ordering of $E$, and replace the cost $c(e)$ of each
element $e$ by the ordered pair $\widetilde c(e)\coloneqq (c(e),2^{p(e)})$, where $p(e)$ is the position of $e$ in the ordering.
Pairs are added coordinate-wise and compared lexicographically.
The min-cost base with respect to $\widetilde c$ in every nonempty family of bases is unique and is minimum with respect to $c$.
The same ordering of the bases can also be induced by the real-valued cost $c_\alpha(e)\coloneqq c(e)+\alpha 2^{p(e)}$ for sufficiently small $\alpha>0$.
Consequently, \cref{thm:proximity} applies to the tie-broken cost.
We suppress the tilde below. 
Under this assumption, the min-cost base of $M$, denoted by $A$ throughout, is unique.
Moreover, for every finite $F'\subseteq G$ for which an $F'$-avoiding base exists, the min-cost $F'$-avoiding base is unique.

The algorithm of \cref{thm:weighted-fpt} is the recursive procedure $\textsc{Solve}(F)$ shown in \cref{alg:solve}, invoked on the input forbidden set.
For a nonnegative integer $q\in \Z_{\ge 0}$, a \emph{$q$-step derivation} of $H \subseteq G$ is a subset $\Lambda \subseteq G$ consisting of the elements $\lambda$ that can be obtained from $H$ by a sequence of $q$ steps $\lambda_0,\lambda_1,\ldots,\lambda_q=\lambda$, for some $\lambda_0\in H$ and $\lambda_{i}=\lambda_{i-1}\pm p_i$ for some $p_i\in F-\ell(A)$ for every $i\in [q]$.

\begin{algorithm}[htbp]
    \caption{$\textsc{Solve}(F)$}\label{alg:solve}
    \begin{algorithmic}[1]
        \algtext*{EndIf}
        \algtext*{EndFor}
        \State $A \gets \argmin\Set{c(B')}{B' \text{ is a base of } M}$\linelabel{line:greedy}
        \If{$\ell(A)\notin F$}
            \State \Return $A$\linelabel{line:returnA}
        \EndIf
        \State $K \gets \emptyset$
        \For{$f \in F$}
            \If{$\textsc{Solve}(F\setminus\{f\})$ returns a base $B_f$}\linelabel{line:recursive}
                \State $K \gets K \cup (B_f \mathbin{\triangle} A)$\linelabel{line:KL}
            \EndIf
        \EndFor
        \State $\mathcal B\gets\Set{A-x+y}{x\in A,\ y\in E\setminus A,\ A-x+y\text{ is an $F$-avoiding base}}$\linelabel{line:swap}
        \For{$a \in A$}
        \State  $\Lambda_a\gets 2|F|$-step derivation of $\ell[K] \cup \{\ell(a)\}$\linelabel{line:derivation}
            \If{\cref{lem:fpt-profile} for $A$, $F$, $\Lambda_a$, $d=|F|$ returns a base $B'$}\linelabel{line:signature_enum}
                \State $\mathcal{B} \gets \mathcal{B} \cup \{B'\}$\linelabel{line:profile}
            \EndIf
        \EndFor
        \If{$\mathcal B\ne\emptyset$}
            \State \Return $\argmin\Set{c(B')}{B' \in \mathcal{B}}$ \linelabel{line:returnmin}
        \Else
            \State \Return no $F$-avoiding base exists\linelabel{line:returnno}
        \EndIf
    \end{algorithmic}
\end{algorithm}

\subsubsection{Analysis}
We now turn to proving the correctness of \cref{alg:solve}.
Every base collected in $\mathcal B$ is an $F$-avoiding base, by construction in \cref{line:swap} and by \cref{lem:fpt-profile} in \cref{line:profile}.
The remaining task is to show that $\mathcal B$ contains the min-cost $F$-avoiding base, which we establish in \crefrange{lem:fpt-low-arcs}{lem:fpt-complete} under the following setting.

Consider an invocation of $\textsc{Solve}(F)$ such that an $F$-avoiding base exists, but the unique min-cost base $A$ is not $F$-avoiding. Denote by $B$ the unique min-cost $F$-avoiding base throughout. 
Denote by $\Delta\coloneqq c(B)-c(A) > 0$. We have $1 \le d(A,B)\le|F|$ by $\ell(A) \in F$ and \cref{thm:proximity}.
Assume that every call in \cref{line:recursive} returns the min-cost $F\setminus\{f\}$-avoiding base $B_f$ whenever one exists.
Then, $d(A,B_f)\le|F|-1$ for every returned base $B_f$ by \cref{thm:proximity}.
If an $F$-avoiding base of the form $A-x+y$ exists, its cost is at least $c(B)$; if equality holds, $A-x+y=B$ by uniqueness and is already collected in \cref{line:swap}.
Hence we may assume that every such base has cost $c(A-x+y)>c(B)$.

Call a pair $(x,y)$ with $x\in A$ and $y\in E\setminus A$ \emph{exchangeable pair} if $\{x,y\}$ is an exchangeable set for $A$, i.e., $A-x+y$ is a base. It is a \emph{light exchangeable pair} if additionally $c(y)-c(x)\le\Delta$. For a light exchangeable pair $(x,y)$, the cost $c(A-x+y)\le c(A)+\Delta=c(B)$. By our standing assumption concerning single exchanges, $A-x+y$ is not $F$-avoiding, which means
$\ell(y)-\ell(x)\in F-\ell(A)$. Observe that if two elements are connected by at most $q$ light exchangeable pairs, then their labels can go from one to the other by a $q$-step derivation. By our standing assumption, $\ell(A)\in F$, and thus one can always add $0\in F-\ell(A)$ to the previous label in a derivation sequence. Therefore, a $q$-step derivation is a subset of a $(q+1)$-step derivation.

To prove the correctness of the algorithm, it suffices to show that for some $a\in A$, a $2|F|$-step derivation of $\ell[K]\cup \{\ell(a)\}$ contains $\ell[A\symdiff B]$, where $K$ is constructed in \cref{line:KL} of \cref{alg:solve}. Indeed, this would imply that in line \cref{line:signature_enum} of the algorithm, the base $B$ lies in the search range of \cref{lem:fpt-profile} and will be returned as the unique min-cost $F$-avoiding base. To achieve this, we crucially use the light exchangeable pairs to connect $K$ to $A\symdiff B$.

Let $G^\circ(A)$ be the exchangeability graph $G(A)$ of $A$ restricted to the light exchangeable pairs. We consider the graph restricted to $A\symdiff B$ and shorten it by $G^\circ\coloneqq G^\circ(A)[A\symdiff B]$. Let $Z_1,\ldots,Z_m$ be the vertex sets of the connected components of $G^\circ$. The proof discusses two cases: (1) every component $Z_k$ either intersects $K$ or is incident to $K$ by a light exchangeable pair; (2) not the first case. The first case is great because $K$ can go to any element in $Z_k$ by at most $|Z_k|$ steps and thus traverse $A\symdiff B=\bigcup_{k\in [m]}Z_k$ in at most $|A\symdiff B|=2d(A,B)\le 2|F|$ steps. Therefore, we mainly focus on the second case in the following. Below we show in \cref{lem:fpt-components} that the union of an arbitrary subset of the components $Z_1,\ldots, Z_k$ is exchangeable for $A$. To prove this we go to the auxiliary digraph $D^\pi(A)$ defined before \cref{lem:fpt-scc-factorization} for a bijection $\pi$ from the perfect matching \cref{lem:perfect-matching}, and show that every $Z_k$ is a union of strongly connected components of the auxiliary digraph. Combining with the additional assumption that there exists a component $Z_k$ that does not satisfy the condition in (1), we prove that $B=B_f\symdiff Z_k$ for some $f\in F$ (\cref{lem:fpt-separated}); in particular $A\symdiff B=(A\symdiff B_f)\symdiff Z_k\subseteq K\cup Z_k$. Thus, we only need to add to $K$ one additional element $a\in Z_k\cap A$ to reach $K\cup Z_k$ in at most $|Z_k|-1$ steps.

We formalize the argument below. First, we need the following lemma.


\begin{lemma}\label{lem:fpt-low-arcs}
    Let $X\subseteq A$, let $Y\subseteq E\setminus A$, and let $\pi:X\rightarrow Y$ be a bijection such that $c(\pi(x))\ge c(x)$ for every $x\in X$ and $c(Y)-c(X) \le \Delta$. 
    Then, every arc from $(x,y)$ to $(x',y')$ inside a strongly connected component of $D^\pi(A)$ gives a light exchangeable pair  $(x',y)$.
\end{lemma}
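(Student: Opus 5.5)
We need to show: if $(x,y)\to(x',y')$ is an arc inside a strongly connected component, then $A-x'+y$ is a base (true by the arc definition) and $c(y)-c(x')\le\Delta$.

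The plan is to close the arc into a directed cycle and bound costs around it. Since the arc lies inside a strongly connected component, there is a directed path from $(x',y')$ back to $(x,y)$. Together with the arc, this gives a directed cycle $(x_1,y_1)\to(x_2,y_2)\to\cdots\to(x_k,y_k)\to(x_1,y_1)$ with $k\ge2$, distinct pairs, $(x_1,y_1)=(x,y)$ and $(x_2,y_2)=(x',y')$. Each arc $(x_i,y_i)\to(x_{i+1},y_{i+1})$ says that $A-x_{i+1}+y_i$ is a base, so each such pair is exchangeable.

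The key telescoping estimate is
\[
\sum_{i=1}^k\bigl(c(y_i)-c(x_{i+1})\bigr)=\sum_{i=1}^k\bigl(c(y_i)-c(x_i)\bigr)\le c(Y)-c(X)\le\Delta,
\]
with indices taken cyclically. The first inequality holds because the omitted pairs satisfy $c(\pi(x))-c(x)\ge0$.

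It then suffices to show that every summand $c(y_i)-c(x_{i+1})$ is nonnegative, so that the single term $c(y)-c(x')$ is at most the whole sum, hence at most $\Delta$. For this I will use minimality of $A$: since $A-x_{i+1}+y_i$ is a base and $A$ is the min-cost base, $c(y_i)\ge c(x_{i+1})$. This gives $c(y)-c(x')\le\Delta$, so $(x',y)$ is a light exchangeable pair.

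The main point to check is exactly this nonnegativity. I must not use the hypothesis $c(\pi(x))\ge c(x)$ there, since it concerns matched pairs rather than arc pairs; instead it comes from $A$ being a min-cost base, which is part of the standing setting. Given that, the argument is short. The only care needed is handling the cycle indices cyclically and noting that the cycle's pairs are distinct, so summing over them is bounded by the sum over all of $X$.
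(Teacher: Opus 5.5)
Your proposal is correct and matches the paper's proof essentially step for step. Both close the arc into a simple directed cycle and use minimality of $A$ to get each $c(y_i)-c(x_{i+1})\ge 0$. Both then bound the cyclic sum by $c(Y)-c(X)\le\Delta$ via the telescoping identity and $c(\pi(x))\ge c(x)$.
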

\begin{proof}
    By the strong connectivity, there is a simple directed path $(x',y')=(x_1,y_1)\to (x_2,y_2)\to\cdots\to (x_l,y_l)=(x,y)$ in $D^\pi(A)$ such that $y_i=\pi(x_i)$ for every $i\in [l]$, which together with the arc $(x,y)\to (x',y')$ forms a directed cycle. By the definition of $D^\pi (A)$, $A-x_{i+1}+y_i$ is a base for every $i\in [l]$, where $l+1\coloneqq 1$. This implies $c(y_{i})-c(x_{i+1})\ge 0$ for every $i\in [l]$.
    Moreover, 
    \[
        \sum_{i=1}^{l} (c(y_{i})-c(x_{i+1}))=\sum_{i=1}^{l}(c(y_{i})-c(x_{i})) \le \sum_{i\in X}(c(\pi(x))-c(x))=c(Y)-c(X)\le \Delta,
    \]
    where the first inequality follows from the fact that $c(\pi(x))\ge c(x)$ for every $x\in X$. 
    As all terms in the sum are nonnegative, we have $c(y_{i})-c(x_{i+1}) \le \Delta$ for each $i \in [l]$. In particular, $c(y_{l})-c(x_{1})=c(y)-c(x')\le\Delta$.
    As $A-x'+y$ is a base, $(x',y)$ is a light exchangeable pair.
\end{proof}


\begin{lemma}\label{lem:fpt-components}
    Let $Z_1,\ldots,Z_m$ be the connected components of $G^\circ$. Then, for every $S\subseteq[m]$, $\bigcup_{k\in S}Z_k$ is an exchangeable set for $A$. In particular, for every $k\in [m]$, $Z_k$ is an exchangeable set for both $A$ and $B$.
\end{lemma}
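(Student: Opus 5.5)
We will apply \cref{lem:fpt-scc-factorization} to $Z\coloneqq A\symdiff B$ with a well-chosen bijection. Set $X\coloneqq A\setminus B$ and $Y\coloneqq B\setminus A$. We first choose a bijection $\pi\colon X\to Y$ satisfying the hypotheses of \cref{lem:fpt-low-arcs}.

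By the perfect matching \cref{lem:perfect-matching}, there is a bijection $\pi$ such that $A-x+\pi(x)$ is a base for every $x\in X$. Since $A$ is the min-cost base, $c(\pi(x))\ge c(x)$. Moreover, $c(Y)-c(X)=c(B)-c(A)=\Delta$. Hence \cref{lem:fpt-low-arcs} applies. Each pair $(x,\pi(x))$ is then itself a light exchangeable pair: it is exchangeable, and
\[
c(\pi(x))-c(x)\le \sum_{x'\in X}\bigl(c(\pi(x'))-c(x')\bigr)=\Delta ,
\]
because every term of the sum is nonnegative. So $x$ and $\pi(x)$ lie in the same component of $G^\circ$.

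Next we show that every strongly connected component $Q$ of $D^\pi(A)$ satisfies $Z(Q)\subseteq Z_k$ for some $k$. Within $Q$, each arc $(x,y)\to(x',y')$ yields, by \cref{lem:fpt-low-arcs}, a light pair $(x',y)$, so $y$ and $x'$ are adjacent in $G^\circ$. Together with the edges $x\sim\pi(x)$ from the previous step, all of $Z(Q)$ is connected in $G^\circ$. Since each pair lies in one $Z_k$, every $Z_k$ is a union of sets $Z(Q)$ over strongly connected components $Q$.

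Now fix $S\subseteq[m]$ and let $W\coloneqq\bigcup_{k\in S}Z_k$. Because $B$ is a base, $Z=A\symdiff B$ is exchangeable for $A$. The forward direction of \cref{lem:fpt-scc-factorization} then gives that $Z(Q)$ is exchangeable for $A$ for every strongly connected component $Q$. Let $P_W$ be the set of pairs contained in $W$ and let $\pi_W$ be the restriction of $\pi$ to $W\cap A$. Then $D^{\pi_W}(A)=D^\pi(A)[P_W]$.

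The step needing care is showing that the strongly connected components of this induced subgraph are exactly the components $Q$ of $D^\pi(A)$ with $Z(Q)\subseteq W$. Two facts make this work. Each such $Q$ is contained in $P_W$. Any directed cycle of $D^\pi(A)$ lies inside a single strongly connected component, and that component is entirely inside or entirely outside $P_W$, so no cycle of the induced subgraph can merge distinct components $Q$. Given this, the reverse direction of \cref{lem:fpt-scc-factorization} shows that $W$ is exchangeable for $A$.

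For the final claim, take $S=[m]\setminus\{k\}$. Then $A\symdiff\bigcup_{j\ne k}Z_j = B\symdiff Z_k$ is a base, which means $Z_k$ is exchangeable for $B$. Taking $S=\{k\}$ shows that $Z_k$ is exchangeable for $A$.
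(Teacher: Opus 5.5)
Your proof is correct and follows the paper's argument essentially step for step. The shared steps are: the perfect-matching bijection $\pi$; lightness of each pair $(x,\pi(x))$; \cref{lem:fpt-low-arcs} to show that each $Z_k$ is a union of sets $Z(Q)$ over strongly connected components $Q$ of $D^\pi(A)$; and \cref{lem:fpt-scc-factorization} applied forward to $\pi$ and backward to its restriction, with $S=\{k\}$ and $S=[m]\setminus\{k\}$ for the final claim.

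One small remark on the step you flag as needing care. The direction that actually matters is that no component $Q\subseteq P_W$ splits in $D^\pi(A)[P_W]$, which holds because every directed path between two vertices of $Q$ stays inside $Q$. The no-merging direction you argue is automatic, since the induced subgraph has fewer arcs.
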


\begin{proof}
Let $Z\coloneqq A\symdiff B$ and note that $Z_1,\ldots, Z_k$ partitions $Z$. Denote by $X\coloneqq Z\cap A$ and $Y\coloneqq Z\cap B$.
Let $\pi\colon X \to Y$ be a bijection such that $A-x+\pi(x)$ is a base for every $x\in X$, which exists by \cref{lem:perfect-matching}.
    Then $c(\pi(x))\ge c(x)$ for each $x \in X$, as $A$ is the unique min-cost base.
    Moreover, since $\sum_{x\in X} (c(\pi(x))-c(x)) = c(Y)-c(X)=c(B)-x(A)= \Delta$, the pair $(x,\pi(x))$ is light for every $x\in X$. In particular, $x$ and $\pi(x)$ lie in the same connected component of $G^\circ$. 
    By \cref{lem:fpt-low-arcs}, every arc $(x,\pi(x))\to (x',\pi(x'))$ inside each strongly connected component $Q$ of $D^\pi(A)$ gives a light exchangeable pair that joins $\{x,\pi(x)\}$ and $\{x',\pi(x')\}$.
    Consequently, $Z(Q)=\bigcup_{(x,\pi(x))\in Q}\{x,\pi(x)\}$ lies in a connected component of $G^\circ$. Therefore, every connected component $Z_k$ is a union of strongly connected components of $D^\pi(A)$. Since $B=A\symdiff Z$ is a base, i.e., $Z$ is exchangeable for $A$, \cref{lem:fpt-scc-factorization} implies that $Z(Q)$ is exchangeable for every strongly connected component $Q$. Consider an arbitrary subset $S\subseteq [m]$, and then $\bigcup_{k\in S}Z_k$ is also a union of strongly connected components of $D^\pi(A)$. Apply \cref{lem:fpt-scc-factorization} again with $\pi'$ being $\pi$ restricted to $\bigcup_{k\in S}Z_k$, we conclude that $\bigcup_{k\in S}Z_k$ is exchangeable because every strongly connected component of $D^{\pi'}(A)$ is a strongly connected component of $D^{\pi}(A)$ and thus exchangeable. Finally, $Z_k$ is exchangeable for $A$ by taking $S=\{k\}$ and exchangeable for $B$ by taking $S=[m]\setminus \{k\}$.
\end{proof}

Recall that the set $K$ constructed in \cref{line:KL} of \cref{alg:solve} is the union of $A\mathbin{\triangle} B_f$ for bases $B_f$ returned in \cref{line:recursive}.

\begin{lemma}\label{lem:fpt-separated}
    Suppose $Z$ is a connected component of $G^\circ$ such that it neither intersects $K$ nor is incident to $K$ by a light exchangeable pair.
    Then, for some $f\in F$, the base $B_f$ returned in \cref{line:recursive} satisfies $B_f\symdiff Z=B$.
\end{lemma}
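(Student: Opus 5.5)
The plan is to compare $B$ with the ``truncated'' base $B'\coloneqq B\symdiff Z=A\symdiff\bigl((A\symdiff B)\setminus Z\bigr)$. The index $f$ will be the label of $B'$, and we then show that $B_f\symdiff Z$ is a base that competes with $B$.

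\textbf{Step 1: $B'$ is a cheaper base with label in $F$.} By \cref{lem:fpt-components}, $Z$ is exchangeable for both $A$ and $B$. Hence $A\symdiff Z$ and $B'$ are bases.

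Since $A\symdiff Z\neq A$ and $A$ is the unique min-cost base, we have $c(Z\setminus A)-c(Z\cap A)>0$. Therefore
\[
c(B')=c(B)-\bigl(c(Z\setminus A)-c(Z\cap A)\bigr)<c(B).
\]
As $B$ is the min-cost $F$-avoiding base, $B'$ is not $F$-avoiding, so $f\coloneqq \ell(B')\in F$. Also, $\ell(B)-\ell(B')=\ell(Z\setminus A)-\ell(Z\cap A)$.

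\textbf{Step 2: $B_f$ exists, with $\ell(B_f)=f$ and $c(B_f)\le c(B')$.} The base $B'$ is $(F\setminus\{f\})$-avoiding, so the recursive call returns the min-cost such base $B_f$, and $c(B_f)\le c(B')<c(B)$.

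If $\ell(B_f)\neq f$, then $B_f$ would be $F$-avoiding and cheaper than $B$, which is a contradiction. Hence $\ell(B_f)=f$. Moreover, $A\symdiff B_f\subseteq K$, so $Z$ is disjoint from $A\symdiff B_f$.

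\textbf{Step 3 (main obstacle): $B_f\symdiff Z$ is a base.} We need $W\coloneqq (A\symdiff B_f)\cup Z$ to be exchangeable for $A$. \Cref{lem:fpt-schrijver} cannot be applied directly, because we only know there are no \emph{light} exchangeable pairs between $Z$ and $K$. So we mimic the proof of \cref{lem:fpt-components}.

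Take a bijection $\pi$ on $W$ that is the union of a perfect matching from \cref{lem:perfect-matching} for $A$ versus $B_f$ and one for $A$ versus $A\symdiff Z$. Then $c(\pi(x))\ge c(x)$ for all $x$, by minimality of $A$. The total excess is
\[
\bigl(c(B_f)-c(A)\bigr)+\bigl(c(B)-c(B')\bigr)\le c(B')-c(A)+c(B)-c(B')=\Delta.
\]
By \cref{lem:fpt-low-arcs}, every arc inside a strongly connected component of $D^\pi(A)$ yields a light exchangeable pair.

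Suppose a strongly connected component contained both a pair from $Z$ and a pair from $A\symdiff B_f$. Strong connectivity would force an arc between the two groups, giving a light exchangeable pair joining $Z$ and $K$, which is excluded by hypothesis. So every strongly connected component lies entirely in $Z$ or entirely in $A\symdiff B_f$.

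Restricting $\pi$ to $A\symdiff B_f$ (respectively to $Z$) gives an induced subdigraph whose strongly connected components are exactly those of $D^\pi(A)$ inside it. Both $A\symdiff B_f$ and $Z$ are exchangeable, so \cref{lem:fpt-scc-factorization} (the forward direction) shows that each such component is exchangeable. Applying \cref{lem:fpt-scc-factorization} again to all of $W$ then shows $W$ is exchangeable. Hence $A\symdiff W=B_f\symdiff Z$ is a base.

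\textbf{Step 4: conclude by uniqueness.} The label is
\[
\ell(B_f\symdiff Z)=f+\ell(B)-\ell(B')=\ell(B)\notin F.
\]
The cost is
\[
c(B_f\symdiff Z)=c(B_f)+c(B)-c(B')\le c(B).
\]
By uniqueness of the min-cost $F$-avoiding base, $B_f\symdiff Z=B$.
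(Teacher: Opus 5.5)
Your proposal is correct and follows essentially the same route as the paper. You set $f=\ell(B\symdiff Z)$, obtain $B_f$ with $\ell(B_f)=f$ and $Z\cap(A\symdiff B_f)=\emptyset$, and prove $B_f\symdiff Z$ is a base by merging the two perfect matchings, using \cref{lem:fpt-low-arcs} to rule out strongly connected components of $D^\pi(A)$ that meet both $Z$ and $A\symdiff B_f$, applying \cref{lem:fpt-scc-factorization} twice, and concluding by uniqueness; the paper does exactly this, with the only unstated detail being that the disjointness gives $Z\cap A\subseteq B_f$ and $(Z\setminus A)\cap B_f=\emptyset$, which is what your label and cost formulas for $B_f\symdiff Z$ rely on.
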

\begin{proof}
    Denote by $X\coloneqq A\cap Z$ and $Y\coloneqq B\cap Z$; then the sets $A' \coloneqq A\symdiff Z= (A\setminus X)\cup Y$ and $B' \coloneqq B\symdiff Z= (B\setminus Y)\cup X$ are both bases by \cref{lem:fpt-components}. 
    As $A$ is the unique min-cost base, we have $c(A')>c(A)$, i.e., $c(Y)>c(X)$.
    Thus, $c(B')<c(B)$, implying that $B'$ is not $F$-avoiding by the fact that $B$ is the min-cost $F$-avoiding base.
    Let $f\coloneqq \ell(B') \in F$.
    As $B'$ avoids $F\setminus\{f\}$, the call $\textsc{Solve}(F\setminus\{f\})$ in \cref{line:recursive} returns a base $B_f$ with $c(B_f)\le c(B')<c(B)$, which implies $B_f$ is not $F$-avoiding. However, $B_f$ avoids $F\setminus\{f\}$, which forces $\ell(B_f)=f$. Denote by $Z'\coloneqq A\symdiff B_f$, and by $X'\coloneqq A\cap Z'$ and $Y'\coloneqq B\cap Z'$. Since $Z'\subseteq K$ by the definition of $K$, the assumption that $Z$ does not intersect $K$ implies that $Z\cap Z'=\emptyset$. Then, as $B_f=A\symdiff Z'$, it holds that $X\subseteq B_f$ and $Y\cap B_f=\emptyset$. Hence, $B_f'\coloneqq B_f\symdiff Z$ satisfies
    \begin{align*}
        c(B_f')&=c(B_f)+c(Y)-c(X) \le c(B') +c(Y)-c(X) = c(B), \\
        \ell(B_f')&=\ell(B_f)+\ell(Y)-\ell(X)=\ell(B')+\ell(Y)-\ell(X)=\ell(B)\notin F.
    \end{align*}

    It remains to show that $B_f'$ is a base, which would imply $B_f'=B$ by the uniqueness, as needed.
    By \cref{lem:perfect-matching} applied to the pairs of bases $(A,A')$ and $(A,B_f)$, there exist bijections $\pi_1\colon X\to Y$ and $\pi_2\colon X'\to Y'$ such that $A-x+\pi_1(x)$ and $A-x'+\pi_2(x')$ are bases for every $x\in X$ and $x'\in X'$. We define $\pi\colon X\cup X'\to Y\cup Y'$ as the union of $\pi_1$ and $\pi_2$, which is a well-defined bijection as $Z\cap Z'=\emptyset$. 
    Consider the auxiliary digraph $D^\pi(A)$ on pairs $P\coloneqq \{(x,\pi(x)):x\in X\cup X'\}$.
    
    We claim that $P_1\coloneqq \{(x,\pi_1(x)):x\in X\}$ and $P_2\coloneqq \{(x,\pi_2(x)):x\in X'\}$ are both unions of strongly connected components of $D^\pi(A)$. Suppose not, which means there exists a strongly connected component $Q\subseteq P$ that intersects both $P_1$ and $P_2$. In particular, $Q$ contains an arc from some  $(x,y)\in P_1$ to some $(x',y')\in P_2$, which corresponds to a basis exchange $A-x'+y$. Below we show that $(x',y)$ is a light exchangeable pair, which contradicts the assumption, as it would be a light exchangeable pair joining $K\supseteq Z'$ and $Z$. To this end, note that $c(\pi(x))>c(x)$ for every $x\in X$ because $A-x+\pi(x)$ is a base whose cost is larger than $A$. Moreover, $c(Y\cup Y')-c(X\cup X')=c(B_f')-c(A)\le c(B)-c(A)=\Delta$. It follows from \Cref{lem:fpt-low-arcs} that $(x',y)$ is a light exchangeable pair, as needed.

    Finally, apply \Cref{lem:fpt-scc-factorization} to $\pi_i$ for both $i\in \{1,2\}$, we obtain that every strongly connected component of $D^\pi(A)$ is exchangeable for $A$. Therefore, applying \Cref{lem:fpt-scc-factorization} again to $\pi$ proves that $Z\cup Z'$ is exchangeable, i.e., $B'_f=B_f\symdiff Z=A\symdiff Z'\symdiff Z$ is a base. As $B_f'$ is an $F$-avoiding base of cost at most $c(B)$, the minimality and uniqueness of $B$ give $B=B_f'$.
\end{proof}

\begin{lemma}\label{lem:fpt-complete}
    \cref{line:profile} adds the min-cost $F$-avoiding base $B$ to $\mathcal B$.
\end{lemma}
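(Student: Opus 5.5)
Our goal is to exhibit some $a\in A$ with $\ell[A\symdiff B]\subseteq\Lambda_a$. Once we have it, $B$ meets all conditions of \cref{lem:fpt-profile} for $\Lambda_a$ and $d=|F|$. Indeed, $B$ is $F$-avoiding. Also $\|\sigma(A)-\sigma(B)\|_1\le|A\symdiff B|=2d(A,B)\le 2|F|$ by \cref{thm:proximity}. Since $B$ is the unique min-cost base among all $F$-avoiding ones, the base returned in \cref{line:signature_enum} must be $B$, and \cref{line:profile} adds it to $\mathcal B$.

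We use two facts about derivations, both already noted in the standing setting. First, every light exchangeable pair $(x,y)$ satisfies $\ell(y)-\ell(x)\in F-\ell(A)$, so moving along a light edge of $G^\circ(A)$ is one derivation step. Second, because $0\in F-\ell(A)$, a $q$-step derivation is contained in a $q'$-step derivation for every $q'\ge q$. It therefore suffices to reach every element of $A\symdiff B$ from $\ell[K]\cup\{\ell(a)\}$ by a walk of at most $2|F|$ light edges, starting at an element of $K\cup\{a\}$.

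Let $Z_1,\dots,Z_m$ be the components of $G^\circ$; they partition $A\symdiff B$, so $\sum_k|Z_k|=2d(A,B)\le 2|F|$.

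\textbf{Case 1.} Every $Z_k$ either meets $K$ or is joined to an element of $K$ by a light exchangeable pair. Fix $k$. If $Z_k$ meets $K$, each vertex of $Z_k$ is reached from a vertex of $K\cap Z_k$ by a path inside $Z_k$ of at most $|Z_k|-1$ light edges. Otherwise, one light edge from $K$ leads into $Z_k$, followed by at most $|Z_k|-1$ edges inside, for at most $|Z_k|\le 2|F|$ steps in total. In both cases $\ell[Z_k]$ lies in the $2|F|$-step derivation of $\ell[K]$, and hence in $\Lambda_a$ for any $a$. Here $A\ne\emptyset$ because $d(A,B)\ge1$, so some $a$ exists.

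\textbf{Case 2.} Some component $Z=Z_k$ neither meets $K$ nor is incident to $K$ by a light pair. By \cref{lem:fpt-separated}, there is $f$ with $B=B_f\symdiff Z$. Then $A\symdiff B=(A\symdiff B_f)\symdiff Z\subseteq K\cup Z$, so the labels of $A\symdiff B$ coming from $K$ are already in $\ell[K]$. Choose $a\in Z\cap A$; this set is nonempty because the perfect matching $\pi$ in the proof of \cref{lem:fpt-components} pairs each $y\in Z\cap B$ with some $x\in Z\cap A$ in the same component. Every element of $Z$ is reached from $a$ in at most $|Z|-1\le 2|F|$ light steps inside $Z$, so $\ell[Z]\subseteq\Lambda_a$.

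The main obstacle is the reduction in Case 2, which is handled entirely by \cref{lem:fpt-separated} and \cref{lem:fpt-components}. What remains is step counting and the padding-by-$0$ argument, which makes derivations monotone in the number of steps.
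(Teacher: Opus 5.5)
Your proof is correct and follows the paper's argument essentially step for step: the same reduction to $\ell[A\symdiff B]\subseteq\Lambda_a$, the same translation of light-edge paths into derivation steps padded by $0\in F-\ell(A)$, and the same two-case split with the separated case resolved by \cref{lem:fpt-separated}. You also supply small details the paper leaves implicit, namely why $Z\cap A\neq\emptyset$ and why the base returned by \cref{lem:fpt-profile} must be $B$.
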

\begin{proof}
    It suffices to show that there exists $a \in A$, so that the $2|F|$-step derivation $\Lambda_a$ of $\ell[K]\cup\{\ell(a)\}$ satisfies $\ell[A\mathbin{\triangle}B]\subseteq\Lambda_a$. Denote by $\Lambda$ the $2|F|$-step derivation of $\ell[K]$, and then $\Lambda\subseteq \Lambda_a$ for every $a\in A$.
    Recall that by our standing assumption, $\ell(y)-\ell(x)\in F-\ell(A)$ for every light exchangeable pair $(x,y)$.

    We claim that for two vertices $u,v$ in $G^\circ$, if $G^\circ$ contains a path of length at most $q$ from $u$ to $v$, then $\ell(v)$ is in the $q$-step derivation of $\{\ell(u)\}$. Indeed, let $u=u_0,u_1,\ldots,u_l=v$ be the path for some $l\le q$. For each $i\in [l]$, either $(u_{i-1},u_{i})$ or $(u_{i},u_{i-1})$ is a light exchangeable pair, depending on which endpoint is in $A$. Then, either $\ell(u_{i})-\ell(u_{i-1})\in F-\ell(A)$ or $\ell(u_{i-1})-\ell(u_{i})\in F-\ell(A)$. Picking $p_i=\ell(u_{i})-\ell(u_{i-1})$ or $\ell(u_{i-1})-\ell(u_{i})$ so that $p_i\in F-\ell(A)$, we obtain that $\ell(u_i)=\ell(u_{i-1})\pm p_i$, and thus $\ell(v)=\ell(u_l)$ is in the $l$-step derivation of $\ell(u)=\ell(u_0)$ by definition. Since $0\in F-\ell(A)$ by assumption, $\ell(v)$ is in the $q$-step derivation of $\ell(u)$ by adding additional $0$'s to the remaining $q-l$ steps.


    Let $Z_1,\ldots,Z_m$ be the connected components of $G^\circ$. Suppose first that, for every $k \in [m]$, $Z_k$ intersects $K$ or is incident to $K$ by a light exchangeable pair.
    By $\ell[A\mathbin{\triangle}B]=\bigcup_{k\in[m]}\ell[Z_k]$, it suffices to show that $\Lambda$ contains $\ell[Z_k]$ for every $k\in[m]$. By the claim above, it suffices to show that for every $k\in [m]$, every vertex in $Z_k$ is connected to $K$ by a path of length at most $2|F|$. This follows from the fact that the path has length at most $|Z_k|\le \sum_{k=1}^m |Z_k|=|A\symdiff B|=2d(A,B)\le 2|F|$ by \cref{thm:proximity}. 

    Consider the remaining case where some $Z_k$ neither intersects $K$ nor is incident to $K$ by a light exchangeable pair.
    By \cref{lem:fpt-separated}, we have $B=B_f \symdiff Z_k$ for some $f \in F$.
    Then, $A\mathbin{\triangle}B=(A\mathbin{\triangle}B_f)\symdiff Z_k$, and hence $\ell[A\mathbin{\triangle}B]\subseteq\ell[K]\cup\ell[Z_k]$ by $A\mathbin{\triangle}B_f\subseteq K$.
    Picking a vertex $a\in A\cap Z_k$, every vertex in $K\cup Z_k$ can be reached from $K\cup a$ by a path of length at most $|Z_k|-1\le 2|F|$. By the claim, the $2|F|$-step derivation $\Lambda_a$ contains $\ell[K]\cup\ell[Z_k]\supseteq\ell[A\mathbin{\triangle}B]$.
\end{proof}

We are now ready to prove \cref{thm:weighted-fpt}.

\begin{proof}[Proof of \cref{thm:weighted-fpt}]
    We first show by induction on $|F|$ that $\textsc{Solve}(F)$ returns the min-cost $F$-avoiding base or correctly reports that none exists.
    If $\ell(A)\notin F$, then $A$ is the min-cost $F$-avoiding base and \cref{line:returnA} is correct; this covers $F=\emptyset$.
    Hence assume $\ell(A)\in F$.
    If no $F$-avoiding base exists, then $\mathcal B=\emptyset$ since every base collected in $\mathcal B$ is $F$-avoiding, and the report in \cref{line:returnno} is correct.
    Otherwise, let $B$ be the min-cost $F$-avoiding base.
    By the induction hypothesis, every call in \cref{line:recursive} returns the min-cost $(F\setminus\{f\})$-avoiding base whenever one exists.
    If some $F$-avoiding base of the form $A-x+y$ has cost $c(B)$, then it equals $B$ and $B\in\mathcal B$ by \cref{line:swap}; otherwise the setting of \crefrange{lem:fpt-low-arcs}{lem:fpt-complete} is realized, and $B\in\mathcal B$ by \cref{lem:fpt-complete}.
    In both cases, since every base in $\mathcal B$ is $F$-avoiding and hence has cost at least $c(B)$, \cref{line:returnmin} returns $B$.

    We then analyze the running time; recall that each cost operation takes $O(n)$ time under the perturbation.
    Consider one invocation of \cref{alg:solve} with $F\ne\emptyset$, excluding the recursive calls in \cref{line:recursive}.
    \cref{line:greedy,line:KL,line:swap} take $O(n^3+|F|n^2+{|F|}^2)$ time.
    Since $|A\symdiff B|=2d(A,B_f)\le2(|F|-1)$ for every base $B_f$ returned in \cref{line:recursive}, we have $|K|\le2|F|(|F|-1)$ and hence $|\ell[K]\cup\{\ell(a)\}|\le2{|F|}^2$.
    By commutativity, each element of a $2|F|$-step derivation is determined by an initial group element and a multiset of at most $2|F|$ signed elements of $F-\ell(A)$. Thus, for every $a\in A$, the $2|F|$-step derivation $\Lambda_a$ of $\ell[K] \cup \set{\ell(a)}$ satisfies
    \[
        |\Lambda_a|\le2{|F|}^2\binom{2|F|+2|F|}{2|F|}\le2{|F|}^2\,2^{4|F|}.
    \]
    Each $\Lambda_a$ in \cref{line:derivation} can be constructed in $O\bigl(|F|^4\,2^{8|F|}\bigr)$ time by enumerating these multisets and removing duplicate labels. Apply \cref{lem:fpt-profile} with $d=|F|$ and since
    \[
        \binom{|\Lambda_a|+|F|}{|F|}^2\le(|\Lambda_a|+|F|+1)^{|F|}\le(2|F|+2)^{4|F|}\,2^{8|F|^2}=O(|F|^{4|F|}2^{9|F|^2}),
    \]
    it takes $O\bigl(|F|^{4|F|}\,2^{9|F|^2}\bigl(n\,T_{\mathrm{WMI}}+|F|\bigr)\bigr)$, where the factor $n$ accounts for additions and comparisons of the symbolically perturbed weights.
    We call \cref{lem:fpt-profile} for every $a\in A$, so this cost is multiplied by $|A| \le n$.
    Selecting a min-cost base in \cref{line:returnmin} takes $O(n)$ time per collected base.
    As $T_{\mathrm{WMI}}\ge n$, one invocation thus takes $O\bigl(|F|^{4|F|}\,2^{9|F|^2}\,n^2\,T_{\mathrm{WMI}}\bigr)$ time.
    The recursion has at most $\sum_{j=0}^{|F|}|F|!/j!\le3\,|F|!$ invocations.
    The total running time is therefore $O\bigl(|F|!\,|F|^{4|F|}\,2^{9|F|^2}\,n^2\,T_{\mathrm{WMI}}\bigr)=2^{O(|F|^2)}\cdot O\bigl(n^2\,T_{\mathrm{WMI}}\bigr)$, since $O(|F|!\,|F|^{4|F|})=2^{O(|F|\log |F|)}$.
\end{proof}

\section*{Acknowledgments}
Siyue Liu was supported in part by the Air Force Office of Scientific Research under agreement number FA9550261B210. Taihei Oki was supported by JST FOREST Grant Number JPMJFR232L, JST BOOST Grant Number JPMJBY25A6, and JSPS KAKENHI Grant Number JP18J22141. We thank Avinash Bhardwaj for sharing the progress on representable matroids \cite{bhardwaj2026Sharp}, and acknowledge that both results were discovered independently.

\paragraph{AI disclosure.}

The authors acknowledge the use of GPT-5.6 Sol in the discovery and exploration of the proofs. All arguments were independently verified by the authors. The manuscript was written entirely by the authors, who take full responsibility for its content. 

\bibliographystyle{alpha}
\bibliography{references}

\end{document}